\theoremstyle{definition}
\newtheorem{defn}{Definition}
\theoremstyle{plain}
\newtheorem{thm}{Theorem}
\newtheorem{lemma}[thm]{Lemma}
\newtheorem{prop}[thm]{Proposition}
\theoremstyle{remark}
	\newtheorem*{rmk}{Remark}
	\newtheorem*{example}{Example}
\newcommand{\cA}{\mathcal{A}}
\newcommand{\cB}{\mathcal{B}}
\newcommand{\cF}{\mathcal{F}}
\newcommand{\cT}{\mathcal{T}}
\newcommand{\N}{\mathbb{N}}
\newcommand{\R}{\mathbb{R}}
\newcommand{\Z}{\mathbb{Z}}
\newcommand{\fT}{\mathfrak{T}}
\newcommand\ShVar[1]{\left\lVert#1\right\rVert_{\mathrm{ShVar}}}
\newcommand{\ShReg}{\mathrm{ShReg}}
\newcommand\VolVar[1]{\left\lVert#1\right\rVert_{\mathrm{VolVar}}}
\newcommand{\VolReg}{\mathrm{VolReg}}
\title{Conformal measures and the Dobrushin-Lanford-Ruelle equations}
\author{Lu{\'i}sa Borsato}
\address{Departamento de Matem\'atica, Instituto de Matem\'atica e Estat\'istica, Universidade de S\~ao Paulo,
R. do Mat\~ao 1010, S\~ao Paulo, SP 05508-900, Brazil}
\email{luisabb@ime.usp.br}
\thanks{The first author is supported by grants 2018/21067-0 and 2019/08349-9, S\~ao Paulo Research Foundation (FAPESP)}
\author{Sophie MacDonald}
\address{Mathematics Department, University of British Columbia, Vancouver, British Columbia, Canada, V6T 1Z2}
\email{sophmac@math.ubc.ca}
\subjclass[2010]{Primary 37D35; Secondary 37B10, 37B50}
\keywords{Gibbs measures, subshifts, interactions, thermodynamic formalism, symbolic dynamics}
\begin{document}

\begin{abstract}
We demonstrate the equivalence of two definitions of a Gibbs measure on a subshift over a countable group. We formulate a more general version of the classical Dobrushin-Lanford-Ruelle equations with respect to a measurable cocycle, which reduce to the classical equations when the cocycle is induced by an interaction or a potential, and show that a measure satisfying these equations must have the conformal property. We also review methods of constructing an interaction from a potential and vice versa, such that the interaction and the potential have the same Gibbs and equilibrium measures. 
\end{abstract}

\maketitle

\section{Introduction}

This paper is concerned with two notions of a Gibbs measure on a subshift over a countable group. The first of these is defined by the Dobrushin-Lanford-Ruelle (DLR) equations, or equivalently a Gibbsian specification. This notion of a Gibbs measure appears for instance in the classical theorems of Dobrushin \cite{dobrushin1970conditional} and Lanford-Ruelle \cite{lanfordruelle1969observables}. The second is the notion of a conformal measure, introduced in \cite{petersen1997symmetric} and 
\cite{denker-urbanski-1991-conformal} and used for instance by Meyerovitch in \cite{meyerovitch-2013-gibbs-eqm} as the setting for a stronger Lanford-Ruelle theorem. There are other definitions in the literature, such as a Gibbs measure in the sense of Bowen, but we do not consider these here.

The purpose of the present article is to show that the two notions of Gibbs measure recalled above coincide in some generality. Our results build on those of Kimura, obtained in his Master thesis \cite{kimura-2015-thesis}, who proves two results relevant here. The first is that every conformal measure, with respect to an appropriately regular potential, satisfies the DLR equations for that potential. The second is a partial converse, namely that every measure satisfying the DLR equations for such a potential is topologically Gibbs. This is a weaker property than being conformal, although equivalent on certain subshifts, such as shifts of finite type \cite{meyerovitch-2013-gibbs-eqm}. Sarig (\cite{sarig2009notes}, Proposition 2.1) shows that the two notions of Gibbsianness are equivalent in the case of a topologically mixing one-sided shift of finite type, using martingale and Ruelle transfer operator methods (note that he uses the word ``conformal'' for a notion that is related to but different from the one we consider). Cioletti-Lopes-Stadlbauer (\cite{cioletti2020ruelle}) prove a similar result for one-sided, one-dimensional shifts with quite general alphabet. Muir \cite{muir2011paper} also obtains the full equivalence for the full shift on $\Z^d$ over a countable alphabet.

Our main result, Theorem \ref{main-thm-abs}, strengthens one of Kimura's results in a more general setting. Specifically, we show that any measure satisfying certain equations with respect to a measurable cocycle on the Gibbs relation must also be conformal with respect to that cocycle. When the cocycle is induced either by an interaction or by a potential in the standard way, these equations reduce to the classical DLR equations. We prove this result for arbitrary subshifts with finite alphabet on an arbitrary countable group. The results of Kimura and Sarig in the forward direction (conformal implies DLR) can also be generalized to our setting; in \S \ref{equiv_sec}, we mention the idea for the proof but refer readers to \cite{kimura-2015-thesis} for the details in Kimura's setting, as the proof strategy changes very little.

The plan is as follows. In \S \ref{def_sec}, we review the definitions and basic facts required to prove our main result in \S \ref{equiv_sec}. In \S \ref{intrxn_sec} and \S \ref{potl_sec}, we recall well-known material on interactions and potentials, respectively, in order to show that the equations involved in our main theorem do in fact reduce to the classical DLR equations. In \S \ref{potl_from_intrxn_sec}, we recall results of Muir and Kimura, elaborating on Ruelle, by which a potential can be constructed from a sufficiently regular interaction, and vice versa, with ``physical'' data (Gibbs and equilibrium measures) preserved. 

In \S \ref{potl_sec} and \S \ref{potl_from_intrxn_sec}, we require that the underlying group admits a finite generating set that yields a certain spherical growth condition, defined in \S \ref{potl_sec}. This condition is satisfied, for any generating set, by any group of polynomial growth of nilpotency class at most $2$, such as $\Z^d$, the case of greatest physical interest. It is also satisfied by any free group $F_n$, with the usual generating set of cardinality $n$, and, conditional on a folklore conjecture, is satisfied by any nilpotent group.

\section{Cocycles and the Gibbs relation: definitions and properties}\label{def_sec}

Throughout, let $G$ be a countable group with identity $e$. Let $\cA$ be a finite alphabet equipped with the discrete topology, and $X \subseteq \cA^G$ a subshift, i.e., a closed set in the product topology, invariant under the shift action of $G$ via $(g \cdot x)_h = x_{g^{-1} h}$. The topology on $X$ is generated by cylinders, i.e., sets of the form $[\omega] = \{ x \, | \, x_{\Lambda} = \omega \}$ for finite sets $\Lambda \Subset G$. We use the notation $\Lambda \Subset G$ to indicate that $\Lambda$ is a finite subset of $G$. This topology can be induced by a metric such that the resulting metric space is complete and separable; that is, $\cA^G$ is a Polish space. We equip $X$ with the Borel $\sigma$-algebra $\cF$.

The \textit{Gibbs relation}, also called the asymptotic relation, is the equivalence relation $\fT_X \subset X \times X$ such that $(x,y) \in \fT_X$ if and only if $x_{\Lambda^c} = y_{\Lambda^c}$ for some finite set $\Lambda \Subset G$. Let $(\Lambda_N)_{N=1}^{\infty}$ be a sequence of finite sets exhausting $G$, i.e., $(\Lambda_N)_{N=1}^{\infty}$ is an increasing sequence and $G = \displaystyle\cup_{N=1}^{+\infty} \Lambda_N$. Define the subrelation $\fT_{X,N} = \{ (x,y): x_{\Lambda_N^c} = y_{\Lambda_N^c} \} \subseteq \fT_X$. Observe that, for each subrelation $\fT_{X,N}$, each equivalence class is a finite set, and that $\fT_X = \cup_{N=0}^{\infty} \fT_{X,N}$. (In the language of Borel equivalence relations, this means that $\fT_X$ is \textit{hyperfinite} \cite{kechris2019borel}, which we mention for context, although we do not use any theorems about hyperfiniteness in this paper.) In particular, every equivalence class in $\fT_X$ is at most countable. Note that we can write each subrelation as $\fT_{X,N} = \cap_{n=N}^{\infty} \cup_{\omega \in \cA^{\Lambda_n \setminus \Lambda_N}} [\omega] \times [\omega]$, which shows that $\fT_{X,N}$ is a measurable subset of $X \times X$ in the product $\sigma$-algebra $\cF \otimes \cF$, as is $\fT_X$.

For Borel sets $A, B \subseteq X$, a \textit{holonomy} of $\fT_X$ is a Borel isomorphism $\psi: A \to B$ such that $(x, \psi(x)) \in \fT_X$ for all $x \in A$. We say that a holonomy $\psi$ is \textit{global} if $A = B = X$. The definitions for $\fT_{X,N}$ are analogous, with a holonomy of $\fT_{X,N}$ also a holonomy of $\fT_X$, for every $N$.

For a Borel set $A \subseteq X$, we denote $\fT_X(A) = \cup_{x \in A} \{ y \in X | (x,y) \in \fT_X \}$, and the same for the subrelations. The saturations $\fT_X(A)$ and $\fT_{X, N}(A)$ are easily shown to be Borel using the fact that the diagonal in $X \times X$ is measurable in the product $\sigma$-algebra, which follows as an easy exercise from the fact that $X$ is Polish.

\begin{lemma}\label{group}
There exists a countable group $\Gamma$ of global holonomies of $X$ such that
\begin{equation*} 
\fT_X = \{ (x, \gamma(x)): x \in X, \gamma \in \Gamma \}.
\end{equation*}
In other words, $\Gamma$ generates $\fT_X$.
\end{lemma}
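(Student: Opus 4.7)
My approach is to build $\Gamma$ explicitly from a countable generating family of ``swap'' involutions of $X$, each of which visibly preserves the complement of a finite window and is therefore a global holonomy of some $\fT_{X,N}$. In spirit this is the construction behind Feldman--Moore for the countable Borel equivalence relation $\fT_X$, but doing it by hand yields a concrete generating set that can be referenced later.

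First, for each $N \ge 1$ and each ordered pair $\omega, \omega' \in \cA^{\Lambda_N}$, I would introduce the continuous ``partial swap'' $\sigma_{\omega \to \omega'}: [\omega] \to \cA^G$ sending $x$ to the unique point agreeing with $\omega'$ on $\Lambda_N$ and with $x$ on $\Lambda_N^c$, together with the analogous map $\sigma_{\omega' \to \omega}$. Since $X$ is closed in $\cA^G$, the sets
\[
A := \{ x \in [\omega] \cap X : \sigma_{\omega \to \omega'}(x) \in X \}, \qquad B := \{ x \in [\omega'] \cap X : \sigma_{\omega' \to \omega}(x) \in X \}
\]
are Borel, and $\sigma_{\omega \to \omega'}$ restricts to a Borel isomorphism $A \to B$ with inverse $\sigma_{\omega' \to \omega}\big|_B$. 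I then extend to a global Borel involution $\gamma_{N,\omega,\omega'}: X \to X$ by declaring it equal to the identity on $X \setminus (A \cup B)$. Because $\gamma_{N,\omega,\omega'}(x)$ agrees with $x$ off $\Lambda_N$ by construction, each $\gamma_{N,\omega,\omega'}$ is a global holonomy of $\fT_{X,N} \subseteq \fT_X$.

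Let $\Gamma$ be the subgroup of the Borel automorphism group of $X$ generated by the family $\{\gamma_{N,\omega,\omega'}\}_{N,\omega,\omega'}$. Since $\cA^{\Lambda_N}$ is finite for every $N$, the generating family is countable, hence $\Gamma$ itself is countable. Moreover $\Gamma$ consists of global holonomies of $\fT_X$, since the symmetry and transitivity of $\fT_X$ ensure that compositions and inverses of global holonomies are again global holonomies. To verify that $\Gamma$ generates $\fT_X$, take any $(x,y) \in \fT_X$, pick $N$ with $x_{\Lambda_N^c} = y_{\Lambda_N^c}$, and set $\omega = x_{\Lambda_N}$, $\omega' = y_{\Lambda_N}$; then $x \in A$ with witness $y \in X$, so $\gamma_{N,\omega,\omega'}(x) = y$, as required.

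The only real subtlety will be the bookkeeping: swapping $\omega$ with $\omega'$ on a generic point of $X$ need not produce a point of $X$, so the partial swap cannot be made into a literal swap of cylinders. This is handled cleanly by restricting to the Borel set where both endpoints lie in $X$ and extending by the identity elsewhere, which is the point of introducing the sets $A$ and $B$ above. With that minor care taken, I do not anticipate any deeper obstacle.
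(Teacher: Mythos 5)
Your proposal is correct and is essentially the paper's own argument: the extended involutions $\gamma_{N,\omega,\omega'}$ (partial swaps on the window $\Lambda_N$, extended by the identity where the swap would leave $X$) are exactly the maps $\psi_{\omega,\eta}$ the paper uses, and the paper likewise takes the countable group they generate and verifies generation of $\fT_X$ by matching $(x,y)$ with the swap of $x_{\Lambda_N}$ and $y_{\Lambda_N}$. The only cosmetic difference is that the paper organizes the generators into an increasing union of finite groups $\Gamma_N$, one for each $N$, whereas you take the group generated by all of them at once; both yield the same conclusion by the same mechanism.
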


\begin{proof}
The group $\Gamma$ can be described explicitly as a countable increasing union of finite groups $\Gamma_N$. For each $N$, the group $\Gamma_N$  generates $\fT_{X, N}$ and is isomorphic to the symmetric group of order $|\cA^{\Lambda_N}|$. Take $\Gamma_N$ to be generated by holonomies $\psi$ of the following form: given $\omega, \eta \in \cA^{\Lambda_N}$, define $\psi_{\omega, \eta}: X \to X$ by
\begin{equation*}
    \psi_{\omega, \eta}(x) = \begin{cases}
    \eta x_{\Lambda_N^c} & x_{\Lambda_N} = \omega, \, \eta x_{\Lambda_N^c} \in X \\
    \omega x_{\Lambda_N^c} & x_{\Lambda_N} = \eta, \, \omega x_{\Lambda_N^c} \in X \\
    x & \text{otherwise}
    \end{cases}
\end{equation*}
That is, $\psi_{\omega, \eta}$ exchanges $\omega$ and $\eta$, wherever possible, and otherwise does nothing. These involutions were considered in \cite{meyerovitch-2013-gibbs-eqm} and \cite{kimura-2015-thesis}, for slightly different purposes.

Observe that $(x,y) \in \fT_{X,N}$ if and only if there exists $\psi \in \Gamma_N$ with $\psi(x) = y$, so $\fT_{X,N}$ is precisely the orbit relation of $\Gamma_N$. The result for $\Gamma$ is immediate.
\end{proof}

\begin{rmk}
We mention for context that Lemma \ref{group} is a special case of the main theorem of \cite{feldman-moore-1977-equivalence}, which in fact asserts the same for any Borel equivalence relation on a Polish space in which every equivalence class is countable. This result was adapted to the symbolic setting in \cite{meyerovitch-2013-gibbs-eqm}, with the countability of the equivalence classes established via the expansivity of the shift action. The proof is presented for subshifts over $\Z^d$, but the same proof goes through for arbitrary countable groups without modification. However, since we establish Lemma \ref{group} directly, we do not need to appeal to the theorem of \cite{feldman-moore-1977-equivalence} (nor the symbolic corollary in \cite{meyerovitch-2013-gibbs-eqm}).
\end{rmk}

We say that a measure $\mu$ on $X$ (by which we always mean a Borel probability measure) is $\fT_X$-nonsingular if for every Borel $A \subset X$ with $\mu(A) = 0$, we have $\mu(\fT_X(A)) = 0$. Note that if $\mu$ is $\fT_X$-nonsingular and $\psi: A \to B$ is a holonomy of $\fT_X$, then whenever $E \subset A$ has $\mu(E) = 0$, we have $\mu(\psi(E)) \leq \mu(\fT_X(E)) = 0$. In particular, the Radon-Nikodym derivative $\frac{d(\mu \circ \psi)}{d\mu}$ is well-defined. The same holds with $\fT_X$ replaced by $\fT_{X,N}$.

A (real-valued) cocycle on $\fT_X$ is a Borel measurable function $\phi: \fT_X \to \R$ such that $\phi(x,y) + \phi(y,z) = \phi(x,z)$ for all $x,y,z \in X$ with $(x,y), (y,z) \in \fT_X$ (so that $(x,z) \in \fT_X$ as well). Any cocycle on $\fT_X$ clearly restricts to a cocycle on $\fT_{X,N}$, for any given $N$. Given a $\fT_X$-nonsingular measure $\mu$ on $X$, we say that a Borel function $D: \fT_X \to \R$ is a \textit{Radon-Nikodym cocycle} on $\fT_X$ with respect to $\mu$ if the pushforward of $\mu$ by any holonomy $\psi: A \to B$ of $\fT_X$ satisfies $\frac{d(\mu \circ \psi)}{d\mu}(x) = D(x, \psi(x))$ for $\mu$-a.e. $x \in A$. It is routine to show, using Lemma \ref{group}, that any $\fT_X$-nonsingular measure $\mu$ on $X$ has a $\mu$-a.e. unique Radon-Nikodym cocycle. 

\begin{defn}[conformal measure]
    Let $\mu$ be a $\fT_X$-nonsingular Borel probability measure on $X$, and let $\phi: \fT_X \to \R$ be a cocycle. We say that $\mu$ is $(\phi, \fT_X)$-\textit{conformal} if for any holonomy $\psi: A \to B$ of $\fT_X$, with $A$ and $B$ Borel sets, we have
    \[
    \mu(B) = \int_A \exp(\phi(x,\psi(x))) \, d\mu(x)
    \]
\end{defn}

Note that this is equivalent to the condition that
\[
D_{\mu, \fT_X}(x,\psi(x)) = \exp(\phi(x,\psi(x)))
\]
for $\mu$-a.e. $x \in A$. Note also that a $\fT_X$-nonsingular measure is conformal precisely with respect to the logarithm of its Radon-Nikodym cocycle.

\begin{rmk} The name ``conformal measure'' was given to a related kind of measure in \cite{denker-urbanski-1991-conformal}, motivated by Patterson's study \cite{patterson1976fuchsian} of measures on the limit sets of particular groups of conformal mappings of the unit disc in the complex plane. The term ``Gibbs measure'' was first applied to the present measures in \cite{capocaccia-1976-definition}.
\end{rmk}

\begin{defn}[DLR equations for a cocycle]
Let $X \subseteq \cA^G$ be a subshift, $\phi$ a cocycle on $\fT_X$, and $\mu$ a measure on $X$. For a Borel set $A \subseteq X$ and a finite set $\Lambda \Subset G$, the DLR equation for $x \in X$ is as follows:

\begin{equation}\label{dlr-eq-cocycle}
    \mu(A \,|\, \mathcal{F}_{\Lambda^c})(x) =
    \sum_{\eta \in \cA^{\Lambda}} \left[ \sum_{\zeta \in \cA^{\Lambda}} \exp( \phi(\eta x_{\Lambda^c}, \zeta x_{\Lambda^c})) \mathbf{1}_X(\zeta x_{\Lambda^c})   \right]^{-1} \mathbf{1}_A(\eta x_{\Lambda^c})
\end{equation}
We say that $\mu$ is DLR with respect to $\phi$ if, for any Borel $A \subseteq X$ and any $\Lambda \Subset G$, \eqref{dlr-eq-cocycle} holds for $\mu$-a.e. $x \in X$.
\end{defn}

\section{Equivalence of the conformal and DLR properties}\label{equiv_sec}

For us, the main value of Lemma \ref{group} is the following lemma, which reveals in particular that to show that a given measure is conformal (such as in Theorem \ref{main-thm-abs}), it is sufficient to consider only global holonomies.

\begin{lemma}\label{ets_group_conf}
Let $\mu$ be a Borel probability measure on $X$, let $\phi$ be a cocycle on $\fT_X$, and let $\Gamma$ be a countable group generating $\fT_X$. Then $\mu$ is $(\phi, \fT_X)$-conformal if and only if, for each $\gamma \in \Gamma$, the pushforward $\mu \circ \gamma$ is absolutely continuous with respect to $\mu$, with $\frac{d(\mu \circ \gamma)}{d\mu}(x) = \exp(\phi(x, \gamma(x))$ for $\mu$-a.e. $x \in X$.
\end{lemma}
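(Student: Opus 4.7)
\bigskip

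The forward direction is immediate: if $\mu$ is $(\phi, \fT_X)$-conformal, then each $\gamma \in \Gamma$ is in particular a global holonomy $X \to X$, so applying the conformal property to arbitrary Borel $A \subseteq X$ and $\psi = \gamma$ yields exactly $\mu(\gamma(A)) = \int_A \exp(\phi(x,\gamma(x))) \, d\mu(x)$, which is the stated Radon-Nikodym derivative condition.

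For the converse, the plan is to reduce an arbitrary holonomy $\psi: A \to B$ to a countable disjoint union of restrictions of elements of $\Gamma$. Enumerate $\Gamma = \{\gamma_1, \gamma_2, \ldots\}$ and set
\[
A_n = \{ x \in A : \psi(x) = \gamma_n(x) \} \setminus \bigcup_{k < n} A_k.
\]
Since $\Gamma$ generates $\fT_X$, for every $x \in A$ we have $(x, \psi(x)) \in \fT_X$, so some $\gamma_n$ satisfies $\gamma_n(x) = \psi(x)$; hence $A = \bigsqcup_n A_n$. Each $A_n$ is Borel because the set $\{x : \psi(x) = \gamma_n(x)\} = (\psi, \gamma_n)^{-1}(\Delta_X)$ is the preimage of the (Borel) diagonal under a Borel map, using the measurability of the diagonal in $X \times X$ recalled in \S \ref{def_sec}.

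Now on $A_n$ the holonomy $\psi$ agrees with $\gamma_n$, so $\psi(A_n) = \gamma_n(A_n)$, and these images are Borel (as $\gamma_n$ is a Borel isomorphism) and pairwise disjoint (since $\psi$ is injective). Therefore
\[
\mu(B) = \sum_{n} \mu(\psi(A_n)) = \sum_{n} \mu(\gamma_n(A_n)) = \sum_{n} \int_{A_n} \exp(\phi(x, \gamma_n(x))) \, d\mu(x),
\]
where the last equality uses the hypothesis $d(\mu \circ \gamma_n)/d\mu = \exp(\phi(\cdot, \gamma_n(\cdot)))$ applied to the Borel set $A_n$. Since $\gamma_n$ and $\psi$ agree on $A_n$, the integrand equals $\exp(\phi(x,\psi(x)))$ there, so the sum collapses to $\int_A \exp(\phi(x,\psi(x)))\, d\mu(x)$, which is the desired conformal identity.

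The only potential obstacle is the measurability of the sets $A_n$ and their images, which I expect to handle cleanly as above using that $X$ is Polish (so the diagonal is Borel) and that the elements of $\Gamma$ are Borel isomorphisms. Everything else is an exercise in $\sigma$-additivity; no martingale or transfer-operator machinery is needed, in contrast to the one-dimensional SFT setting considered by Sarig.
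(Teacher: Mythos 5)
Your proof is correct and takes essentially the same route as the paper's: the same decomposition of the holonomy domain according to agreement with an enumeration of $\Gamma$, the same diagonal-preimage argument for Borel measurability, and the same countable-additivity computation. The only (harmless) difference is that the paper first verifies $\fT_X$-nonsingularity, which its definition of a conformal measure presupposes; in your write-up this follows anyway, since the identity you establish for all holonomies, applied to the restrictions $\gamma|_A$ for $\gamma \in \Gamma$, shows that $\mu(A)=0$ implies $\mu(\fT_X(A)) = \mu\bigl(\bigcup_{\gamma \in \Gamma} \gamma(A)\bigr) = 0$.
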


\begin{proof}
The ``only if'' direction is immediate from the definition of conformal measure. To confirm the ``if'' direction, we first check nonsingularity. Let $A \subset X$ be Borel with $\mu(A) = 0$. Then $\fT_X(A) = \bigcup_{\gamma \in \Gamma} \gamma(A)$, which is a countable union and thus has measure zero by the explicit expression for $\frac{d (\mu\circ\gamma)}{d\mu}$.

Now let $\psi: A \to B$ be a holonomy of $\fT_X$ and let $E \subseteq A$ be Borel. Let $\Gamma = (\gamma_n)_{n \in \N}$ be an enumeration of $\Gamma$. For each $n \in \N$, let $E_n = \{ x \in E: \psi(x) = \gamma_n(x) \}$. To see that each $E_n$ is Borel, define the map $\tau_n: X \to X \times X$ by $\tau_n(x) = (\psi(x), \gamma_n(x))$, which is clearly measurable in the product $\sigma$-algebra. Then $E_n = \tau_n^{-1}(D)$ where $D \subset X \times X$ is the diagonal, which, as discussed above, is also Borel in the product $\sigma$-algebra, because $X$ is Polish.


Now let $E_0' = E_0$, and for $n \geq 1$, let $E_n' = E_n \setminus \cup_{k=0}^{n-1} E_k$. The Borel sets $E_n'$ partition $E$, so 
\begin{equation*}
\mu(\psi(E)) = \sum_{n=0}^{\infty} \mu(\gamma_n(E_n')) = \int_{E} \exp(\phi(x, \psi(x))) \, d\mu(x)
\end{equation*}
Thus $\frac{d(\mu \circ \psi)}{d\mu}(x) = \exp( \phi(x, \psi(x))$ for $\mu$-a.e. $x \in A$, as required. 
\end{proof}

We will use Lemma \ref{ets_group_conf} in concert with the following lemma, which reduces the question of $(\phi, \fT_X)$-conformality to that of conformality with respect to the finite-order subrelations.

\begin{lemma}\label{fin_imply_full}
Let $\mu$ be a measure on $X$ and $\phi$ a cocycle on $\fT_X$. Suppose that $\mu$ is $(\phi, \mathfrak{T}_{X,N})$-conformal for each $N \geq 0$. Then, $\mu$ is $(\phi, \mathfrak{T}_X)$-conformal.
\end{lemma}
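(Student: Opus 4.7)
The plan is to reduce $(\phi, \fT_X)$-conformality to verification on a countable generating group via Lemma \ref{ets_group_conf}, and then exploit the fact that the group $\Gamma$ constructed in Lemma \ref{group} is a nested union $\bigcup_N \Gamma_N$ in which each $\Gamma_N$ consists of global holonomies of the finite-level subrelation $\fT_{X,N}$.

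First, by Lemma \ref{ets_group_conf} applied to $\Gamma$, it suffices to verify that for every $\gamma \in \Gamma$, the pushforward $\mu \circ \gamma$ is absolutely continuous with respect to $\mu$ with Radon-Nikodym derivative $\exp(\phi(x, \gamma(x)))$ for $\mu$-a.e.\ $x$. Fix such a $\gamma$. By the construction of $\Gamma$ in the proof of Lemma \ref{group}, there exists $N$ with $\gamma \in \Gamma_N$, and in particular $\gamma$ is a global holonomy of $\fT_{X,N}$ (being a composition of the generating involutions $\psi_{\omega,\eta}$, each of which is such a global holonomy).

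Next, apply the hypothesis that $\mu$ is $(\phi, \fT_{X,N})$-conformal directly to $\gamma$: for every Borel set $E \subseteq X$, the restriction $\gamma|_E : E \to \gamma(E)$ is a holonomy of $\fT_{X,N}$, so the definition of conformal measure gives
\[
\mu(\gamma(E)) = \int_E \exp(\phi(x, \gamma(x))) \, d\mu(x).
\]
This is exactly the statement that $\mu \circ \gamma \ll \mu$ with $\frac{d(\mu \circ \gamma)}{d\mu}(x) = \exp(\phi(x, \gamma(x)))$ for $\mu$-a.e.\ $x$. Since this holds for every $\gamma \in \Gamma$, the hypothesis of Lemma \ref{ets_group_conf} is verified, and we conclude that $\mu$ is $(\phi, \fT_X)$-conformal.

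The argument is essentially bookkeeping and I do not anticipate a serious obstacle: the hyperfinite structure of $\fT_X$ made explicit in Lemma \ref{group}, combined with the reduction to countably many global holonomies provided by Lemma \ref{ets_group_conf}, ensures that each individual Radon-Nikodym identity is verified inside a single finite-level subrelation $\fT_{X,N}$, where the hypothesis applies verbatim.
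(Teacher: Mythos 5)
Your proof is correct, but it takes a genuinely different route than the paper's. The paper also begins by invoking Lemma \ref{ets_group_conf}, but then fixes an \emph{arbitrary} global holonomy $\psi$ of $\fT_X$, whose graph need not lie in any single $\fT_{X,N}$; it therefore writes a Borel set $A$ as the increasing union of the sets $A_N = \{x \in A : (x,\psi(x)) \in \fT_{X,N}\}$, applies the level-$N$ hypothesis to each restriction $\psi|_{A_N}$, and passes to the limit by monotone convergence. You avoid both the exhaustion and the limiting step by feeding Lemma \ref{ets_group_conf} the specific group $\Gamma = \bigcup_N \Gamma_N$ constructed in Lemma \ref{group}: since that union is increasing and every element of $\Gamma_N$ alters only coordinates in $\Lambda_N$ (being a composition of the involutions $\psi_{\omega,\eta}$ with $\omega, \eta \in \cA^{\Lambda_N}$, and $\fT_{X,N}$ is transitive), each $\gamma \in \Gamma$ is a global holonomy of a single $\fT_{X,N}$, so the finite-level hypothesis applies verbatim and yields the Radon--Nikodym identity $\mu(\gamma(E)) = \int_E \exp(\phi(x,\gamma(x)))\, d\mu(x)$ with no limit needed. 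What your argument buys is directness; the small price is that it is tied to a generating group adapted to the filtration $(\fT_{X,N})_N$, which Lemma \ref{group} happens to supply in the symbolic setting, whereas the paper's argument handles an arbitrary global holonomy and arbitrary generating group, which is what lets its proof transfer unchanged to the general hyperfinite relations mentioned in the remark following the lemma. There is no circularity in your use of Lemma \ref{ets_group_conf}, since that lemma is proved independently of this one.
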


\begin{proof} By Lemma \ref{ets_group_conf}, it is enough to consider only global holonomies. Let $\psi: X \to X$ be a global holonomy of the Gibbs relation $\mathfrak{T}_X$ and let $A \subseteq X$ be a Borel set. We begin by writing $A$ as the increasing union $A = \cup_{N = 0}^{\infty} A_N$, where $A_N = \{ x \in A: (x, \psi(x)) \in \mathfrak{T}_{X,N} \}$. Since $\psi|_{A_N}$ is a holonomy of $\mathfrak{T}_{X,N}$ and $\mu$ is $(\phi, \mathfrak{T}_{X, N})$-conformal, we have 
\begin{align*}
\mu(\psi(A)) &= \lim_{N \to \infty} \mu(\psi(A_N)) \\
&= \lim_{N \to \infty} \int_{A_N} \exp( \phi(x, \psi(x) ) ) \, d\mu(x) \\
&= \int_{A} \exp( \phi(x, \psi(x) ) ) \, d\mu(x),
\end{align*}
by monotone convergence. Thus, $\mu$ is indeed $(\phi, \fT_X)$-conformal by Lemma \ref{ets_group_conf}.
\end{proof}

To echo the comment above on hyperfiniteness, we remark here that both of these results apply, with the same proofs, to any hyperfinite Borel equivalence relation on any Polish space. The following lemma, by contrast, seems to rely more specifically on the structure of $X$ as a subshift.

\begin{lemma}\label{dlr_conf_subrel_abs}
Let $X \subseteq \mathcal{A}^{G}$ be a subshift, let $\phi$ be a cocycle on $X$, and let $\mu$ be a DLR measure on $X$ with respect to $\phi$. Let $N \geq 1$. Then $\mu$ is $(\phi, \fT_{X,N})$-conformal.
\end{lemma}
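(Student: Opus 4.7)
The plan is to reduce to checking conformality on a finite generating set and then use the DLR equation with $\Lambda = \Lambda_N$ to verify the defining identity. By the obvious analog of Lemma \ref{ets_group_conf} for the finite-index subrelation $\fT_{X,N}$ and the finite generating group $\Gamma_N$ produced in the proof of Lemma \ref{group} (the proof of Lemma \ref{ets_group_conf} only uses countability of the generating group), it is enough to show that for every $\gamma \in \Gamma_N$ and every Borel $A \subseteq X$,
\[
\mu(\gamma(A)) = \int_A \exp(\phi(x, \gamma(x))) \, d\mu(x).
\]
Since the cocycle identity makes this condition multiplicative in $\gamma$, one may restrict attention to the generating involutions $\psi_{\omega,\eta}$, though the argument below applies unchanged to any $\gamma \in \Gamma_N$.

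The key structural observation is that every $\gamma \in \Gamma_N$ is a holonomy of $\fT_{X,N}$, so it preserves $x_{\Lambda_N^c}$. Hence, for each $x$, it induces a permutation $\sigma_x$ of the finite set $S_x = \{\eta \in \cA^{\Lambda_N} : \eta x_{\Lambda_N^c} \in X\}$ (depending only on $x_{\Lambda_N^c}$) such that $\gamma(\eta x_{\Lambda_N^c}) = \sigma_x(\eta)\, x_{\Lambda_N^c}$ for every $\eta \in S_x$. I would then apply the DLR equation with $\Lambda = \Lambda_N$ to the set $\gamma(A)$, use the identity $\mathbf{1}_{\gamma(A)}(\eta x_{\Lambda_N^c}) = \mathbf{1}_A(\sigma_x^{-1}(\eta)\, x_{\Lambda_N^c})$ (with both sides vanishing when $\eta \notin S_x$), and re-index the resulting finite sum by $\eta' = \sigma_x^{-1}(\eta)$.

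The remaining step is algebraic. Writing $K(\eta, x)$ for the DLR weight $\bigl[\sum_{\zeta} \exp(\phi(\eta x_{\Lambda_N^c}, \zeta x_{\Lambda_N^c})) \mathbf{1}_X(\zeta x_{\Lambda_N^c})\bigr]^{-1}$, the cocycle identity $\phi(\sigma_x(\eta'), \zeta) = \phi(\eta', \zeta) - \phi(\eta', \sigma_x(\eta'))$ yields the transformation rule
\[
K(\sigma_x(\eta'), x) = \exp\bigl( \phi(\eta' x_{\Lambda_N^c},\, \sigma_x(\eta')\, x_{\Lambda_N^c}) \bigr) \, K(\eta', x).
\]
Inserting this into the re-indexed sum, and recognizing the result as the DLR kernel applied to the bounded measurable function $y \mapsto \mathbf{1}_A(y) \exp(\phi(y, \gamma(y)))$ (a routine extension of the DLR formula from indicators to bounded measurables via simple-function approximation), one obtains $\int_A \exp(\phi(x, \gamma(x))) \, d\mu(x)$, as required. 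The main obstacle is not conceptual but notational: keeping track of the finite permutation $\sigma_x$ and the reciprocal normalization in $K$. The crucial content is precisely the transformation rule above, which is exactly what the cocycle identity was designed to deliver and which ensures that the ostensibly asymmetric role of $\eta$ in the DLR kernel collapses into the desired Radon-Nikodym factor $\exp(\phi(x, \gamma(x)))$.
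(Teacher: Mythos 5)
Your argument is correct, and its algebraic core --- the transformation rule $K(\sigma_x(\eta'),x) = \exp\bigl(\phi(\eta' x_{\Lambda_N^c}, \sigma_x(\eta')\,x_{\Lambda_N^c})\bigr)\,K(\eta',x)$ extracted from the cocycle identity --- is exactly the computation at the heart of the paper's proof. The surrounding bookkeeping is genuinely different, though. The paper fixes a global holonomy $\psi$ of $\fT_{X,N}$, uses the fact that $\psi$ has finite order ($\psi^r = \mathrm{id}$, by boundedness of the equivalence classes), and partitions $X$ into sets $T_{\overline{\eta}} = \{x : \psi^j(x)_{\Lambda_m} = \eta_j,\ 0 \le j \le r-1\}$ recording the $\Lambda_m$-patterns of the entire $\psi$-orbit. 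On each $T_{\overline{\eta}}$ both $x$ and $\psi(x)$ carry fixed patterns $\eta_0, \eta_1$, so the Radon--Nikodym factor $\exp(\phi(x,\psi(x))) = \exp(\phi(\eta_0 x_{\Lambda_m^c}, \eta_1 x_{\Lambda_m^c}))$ becomes an $\cF_{\Lambda_m^c}$-measurable function; the paper then needs only the DLR equations for indicators plus the standard pull-out property of conditional expectation, checking the conformal identity on cylinders and extending by a $\pi$-system argument. You instead keep an arbitrary Borel set $A$ and encode the holonomy as a fiberwise pattern permutation $\sigma_x$, which avoids the orbit partition and the finite-order observation entirely, but at the cost of needing the DLR identity for non-indicator integrands in your last step.

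That last step is the one place you should be more careful: you call $y \mapsto \mathbf{1}_A(y)\exp(\phi(y,\gamma(y)))$ bounded, but $\phi$ is only assumed Borel, so this function can be unbounded even on $\fT_{X,N}$-pairs, and uniform simple-function approximation is not available. The repair is immediate: the function is nonnegative, so approximate by simple functions increasing to it and use monotone convergence on both sides of the DLR identity (conditional monotone convergence on the left, termwise convergence of the finite sum on the right), reading everything in $[0,\infty]$; finiteness follows a posteriori since $\mu(\gamma(A)) \le 1$. With that adjustment your proof is complete, and it is arguably more direct than the paper's, since it never needs the auxiliary scale $m \ge N$, the cyclic-orbit decomposition, or the reduction to cylinder sets.
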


\begin{proof}
It is enough to show that $\mu(\psi([\omega])) = \int_{[\omega]} \exp( \phi(x, \psi(x))) \, d\mu(x)$ for any cylinder $[\omega]$ and (by Lemma \ref{ets_group_conf}) any global holonomy $\psi$ of $\fT_{X,N}$. Fix a holonomy $\psi: X \to X$ of $\fT_{X, N}$. Since the equivalence classes of $\fT_{X, N}$ are finite, and in fact have bounded cardinality, there exists some $r \geq 0$ such that $\psi^r(x) = x$, for all $x \in X$. Let $m \geq N$ and fix $\omega \in \cA^{\Lambda_m}$. We now partition $X$ according to the orbits of points under $\psi$, in such a way that $[\omega]$ is partitioned into sets that are easy to control. 
Specifically, for each $\overline{\eta} = (\eta_0, \dots, \eta_{r-1}) \in ( \mathcal{A}^{\Lambda_m})^r$, let
\begin{equation*}
T_{\overline{\eta}} = \{ x \in X: \psi^j(x)_{\Lambda_m} = \eta_j, 0 \leq j \leq r-1 \}
\end{equation*}
Note that $T_{\overline{\eta}}$ can be empty. We have $[\omega] = \sqcup_{\overline{\eta} : \eta_0 = \omega } T_{\overline{\eta}}$, and $\psi(T_{\overline{\eta}}) = T_{\overline{\sigma \eta}}$, where $\overline{\sigma \eta} = (\eta_1, \dots, \eta_{r-1}, \eta_0)$ is a cyclic permutation of $\overline{\eta}$. It is enough to show that, for all $\overline{\eta} \in (\mathcal{A}^{\Lambda_m})^r$, we have
\begin{equation*} \mu(\psi(T_{\overline{\eta}})) = \int_{T_{ \overline{\eta}}} \exp \left( {\phi(x, \psi(x))} \right) d\mu(x).
\end{equation*}

By the equality $\psi(T_{\overline{\eta}}) = T_{\overline{\sigma \eta}}$, we have
\begin{equation*}
    \mu(\psi(T_{\overline{\eta}})) = \int_X \mu( T_{\overline{\sigma \eta}} \, | \, \cF_{\Lambda_m^c} ) \, d\mu(x)
\end{equation*}
For any $x \in X$, we know that
\begin{equation*}
    \mathbf{1}_{T_{\overline{\sigma\eta}}}(\eta_1 x_{\Lambda_m^c} )
    = \mathbf{1}_{T_{\overline{\eta}}}(\eta_0 x_{\Lambda_m^c} ) 
\end{equation*}
By this identity, as well as the DLR hypothesis and the defining property of a cocycle, we have the following manipulations:
\begin{align*}
    \mu( T_{\overline{\sigma \eta}} \, | \, \cF_{\Lambda_m^c} )(x) &= \left[ \sum_{\zeta \in \cA^{\Lambda_m}} \exp( \phi(\eta_1 x_{\Lambda_m^c}, \zeta x_{\Lambda_m^c})) \mathbf{1}_X(\zeta x_{\Lambda_m^c})   \right]^{-1} \mathbf{1}_{T_{\overline{\sigma \eta}}}(\eta_1 x_{\Lambda_m^c}) \\
    &= \left[ \sum_{\zeta \in \cA^{\Lambda_m}} \exp( \phi(\eta_0 x_{\Lambda_m^c}, \zeta x_{\Lambda_m^c})) \mathbf{1}_X(\zeta x_{\Lambda_m^c}) \right]^{-1} \\
    & \qquad \qquad \times \mathbf{1}_{T_{\overline{\eta}}}(\eta_0 x_{\Lambda_m^c}) \, \exp( \phi(\eta_0 x_{\Lambda_m^c}, \eta_1 x_{\Lambda_m^c})) \\
    &= \mu( T_{\overline{\eta}} \, | \, \cF_{\Lambda_m^c} )(x) \, \exp( \phi(\eta_0 x_{\Lambda_m^c}, \eta_1 x_{\Lambda_m^c}))
\end{align*}
Integrating this equation yields the result.
\end{proof}

We have therefore done all the work required to prove the following:

\begin{thm}\label{main-thm-abs}
Let $X \subseteq \cA^G$ be a subshift, $\phi$ a cocycle on $X$, and $\mu$ a DLR measure on $X$ with respect to $\phi$. Then $\mu$ is $(\phi, \fT_X)$-conformal.
\end{thm}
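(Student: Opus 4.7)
The plan is to deduce the theorem directly from the three lemmas already in place, which have been arranged precisely so that the argument reduces to a short chain. The strategy is: first promote the DLR hypothesis to conformality with respect to each finite-order subrelation $\fT_{X,N}$, and then glue these finite-level conformalities into full $(\phi, \fT_X)$-conformality.

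Concretely, I would proceed as follows. By Lemma \ref{dlr_conf_subrel_abs}, the assumption that $\mu$ is DLR with respect to $\phi$ implies that $\mu$ is $(\phi, \fT_{X,N})$-conformal for every $N \geq 1$. The remaining hypothesis needed for Lemma \ref{fin_imply_full}, namely the case $N = 0$, is harmless: the equivalence classes of $\fT_{X,0}$ are determined by agreement off a very small (or empty) window, and any holonomy of such a subrelation is covered by finitely many global holonomies already accounted for at higher $N$, so one may either verify conformality at $N = 0$ trivially or simply start the exhausting sequence at $N = 1$ (the monotone-convergence argument in Lemma \ref{fin_imply_full} is indifferent to where the index starts, since $\fT_X = \bigcup_{N \geq 1} \fT_{X,N}$ as well).

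Once conformality on each $\fT_{X,N}$ is in hand, Lemma \ref{fin_imply_full} immediately upgrades this to $(\phi, \fT_X)$-conformality, completing the proof. No new computations with the cocycle identity or the DLR equations are needed at this stage; the heavy lifting has already been done inside Lemma \ref{dlr_conf_subrel_abs}, where the orbit-partitioning trick together with the cocycle property was used to extract conformality from the conditional-expectation formula.

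Accordingly, there is no substantive obstacle left. The only minor point worth care is the bookkeeping around the index $N$ in the exhausting sequence and the reduction to global holonomies via Lemma \ref{ets_group_conf}, both of which are already absorbed into Lemma \ref{fin_imply_full}. The theorem then reads as a formal consequence, and the proof can be written in a couple of sentences invoking Lemmas \ref{dlr_conf_subrel_abs} and \ref{fin_imply_full} in sequence.
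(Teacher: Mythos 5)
Your proposal is correct and follows exactly the paper's own proof: Lemma \ref{dlr_conf_subrel_abs} gives $(\phi, \fT_{X,N})$-conformality for each $N$, and Lemma \ref{fin_imply_full} then upgrades this to $(\phi, \fT_X)$-conformality. Your aside about the indexing mismatch ($N \geq 0$ versus $N \geq 1$) is a reasonable bit of bookkeeping that the paper itself glosses over, and your resolution of it is sound.
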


\begin{proof}
By Lemma \ref{dlr_conf_subrel_abs}, $\mu$ is $(\phi, \fT_{X,N})$-conformal for each $N$. The result is then immediate from Lemma \ref{fin_imply_full}.
\end{proof}

Theorem \ref{main-thm-abs} was proven by Kimura (\cite{kimura-2015-thesis}, Theorem 5.30) in the special case that $G=\Z^d$, $X$ is a shift of finite type, and the cocycle $\phi$ is induced by a potential, in the manner that we discuss in Proposition \ref{cocyclepot} below. Furthermore, Kimura proved the following converse (\cite{kimura-2015-thesis}, Corollary 5.33), again in the case of $G=\Z^d$ and $\phi$ induced by a potential, but with no finite type assumption on $X$.

\begin{thm}\label{fwd-cocycle}
Let $X \subseteq \cA^G$ be a subshift, $\phi$ a cocycle on $X$, and $\mu$ a $(\phi, \fT_X)$-conformal measure on $X$. Then $\mu$ is DLR with respect to $\phi$.
\end{thm}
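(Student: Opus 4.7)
The plan is to derive \eqref{dlr-eq-cocycle} from conformality by feeding the involutive global holonomies $\psi_{\omega,\eta}$ introduced in Lemma~\ref{group} into the defining identity of a conformal measure. Fix $\Lambda \Subset G$ throughout.

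First, I will reduce to verifying \eqref{dlr-eq-cocycle} for cylinders $A = [\omega]$ with $\omega \in \cA^\Lambda$. Both sides of \eqref{dlr-eq-cocycle}, as set functions of $A$, extend to probability measures for each $x$: the right-hand side is a weighted finite sum of point masses at the configurations $\{\eta x_{\Lambda^c}\}_\eta$ (summing to one by the cocycle identity applied to a common reference point), and the left-hand side is a regular conditional probability, which exists because $X$ is Polish. A standard $\pi$--$\lambda$ argument over the countable $\pi$-system of cylinders, together with a splitting of each cylinder $[\omega']$ with $\omega' \in \cA^{\Lambda'}$ into its $\Lambda$-part and a $\cF_{\Lambda^c}$-part (the latter being absorbed into the conditioning), reduces matters to $A = [\omega]$ with $\omega \in \cA^\Lambda$.

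Second, I will apply conformality through the involutions $\psi_{\omega, \eta}$, defined exactly as in Lemma~\ref{group} but with $\Lambda$ replacing $\Lambda_N$. For fixed $\omega, \eta \in \cA^\Lambda$ and $B \in \cF_{\Lambda^c}$, the map $\psi_{\omega,\eta}$ restricts to a bijection
\[
E_\eta := \{x \in [\omega] \cap B : \eta x_{\Lambda^c} \in X\} \longrightarrow F_\eta := \{y \in [\eta] \cap B : \omega y_{\Lambda^c} \in X\}, \quad x \mapsto \eta x_{\Lambda^c}.
\]
Since $\psi_{\omega,\eta}$ is a global holonomy of $\fT_X$ and $x = \omega x_{\Lambda^c}$ on $[\omega]$, applying $(\phi, \fT_X)$-conformality to $\psi_{\omega,\eta}|_{E_\eta}$ gives
\[
\mu(F_\eta) = \int_{E_\eta} \exp\bigl(\phi(\omega x_{\Lambda^c}, \eta x_{\Lambda^c})\bigr) \, d\mu(x).
\]

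Third, I will sum over $\eta \in \cA^\Lambda$. The $F_\eta$ partition $\{y \in B : \omega y_{\Lambda^c} \in X\}$ according to the value on $\Lambda$, and, setting $Z_\omega(x) := \sum_\eta \mathbf{1}_X(\eta x_{\Lambda^c}) \exp(\phi(\omega x_{\Lambda^c}, \eta x_{\Lambda^c}))$ (a $\cF_{\Lambda^c}$-measurable function), the summed identity becomes
\[
\mu\bigl(\{y : \omega y_{\Lambda^c} \in X\} \cap B\bigr) = \int_B \mathbf{1}_{[\omega]}(x)\, Z_\omega(x) \, d\mu(x).
\]
Since $B \in \cF_{\Lambda^c}$ is arbitrary and $Z_\omega$ is $\cF_{\Lambda^c}$-measurable, passing to the conditional expectation and dividing by $Z_\omega$ (which is at least $1$ wherever $\omega x_{\Lambda^c} \in X$, as $\phi(y,y) = 0$ by the cocycle identity) yields $\mu([\omega] \mid \cF_{\Lambda^c})(x) = Z_\omega(x)^{-1} \mathbf{1}_X(\omega x_{\Lambda^c})$ for $\mu$-a.e.\ $x$, which is \eqref{dlr-eq-cocycle} for $A = [\omega]$.

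The main technical obstacle is the careful identification in Step~2 of the image of $E_\eta$ under $\psi_{\omega,\eta}$: one must check that the ``otherwise'' branch of Lemma~\ref{group}, on which $\psi_{\omega,\eta}$ acts as the identity, does not contribute to the bijection between $E_\eta$ and $F_\eta$, and that membership in $B$ is preserved under the swap (which follows from $B \in \cF_{\Lambda^c}$). Everything else is routine bookkeeping with measurability and monotone-class arguments.
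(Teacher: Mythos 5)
Your proposal is correct, and it takes essentially the same approach the paper itself sketches (the paper defers the details to Kimura's thesis): apply the conformal property to the exchange involutions $\psi_{\omega,\eta}$ of Lemma~\ref{group} to compare the conditional measures of the cylinders $[\omega]$ and $[\eta]$, then conclude by routine conditional-expectation and $\pi$--$\lambda$ bookkeeping. You have simply filled in the adaptation that the paper leaves to the reader, including the correct handling of the arbitrary cocycle and the non-finite-type issue via the indicators $\mathbf{1}_X(\eta x_{\Lambda^c})$.
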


The proof of Theorem \ref{fwd-cocycle} is a straightforward adaptation of the methods that Kimura used for the case that he treated. The rough idea is to show that two cylinder sets have conditional measures with the appropriate ratio by considering the holonomy that exchanges them, as in the proof of Lemma \ref{group} above, then applying the conformal hypothesis. The main difference required to adapt the proof is that the version stated here concerns the DLR equations for an arbitrary measurable cocycle, not necessarily one induced by a potential.


\section{Interactions}\label{intrxn_sec}

In this section, we show that, when a cocycle is induced by an interaction, the DLR equations for the cocycle reduce to those for the interaction.

\begin{defn}[interaction]\label{intrxn-defn} An interaction is a family $\Phi = (\Phi_{\Lambda})_{\Lambda \Subset G}$ of functions $\Phi_{\Lambda}: X \to \R$ such that for each $\Lambda \Subset G$, $\Phi_{\Lambda}$ is $\mathcal{F}_{\Lambda}$-measurable, and for all $\Lambda \Subset G$, $x \in X$, the \textit{Hamiltonian series}
    \begin{equation*}
        H_{\Lambda}^{\Phi}(x) = \sum_{\substack{\Delta \Subset G \\ \Delta \cap \Lambda \neq \emptyset}} \Phi_{\Delta}(x)
    \end{equation*}
    converges in the sense that there exists a real number $H_{\Lambda}^{\Phi}(x)$ and, for every $\varepsilon > 0$, there exists some $F \Subset G$ such that, for all $F' \supseteq F$, 
   	\begin{equation*}
   	\left|  H_{\Lambda}^{\Phi}(x) -  \sum_{\substack{\Delta \subseteq F' \\ \Delta \cap \Lambda \neq \emptyset}} \Phi_{\Delta}(x)     \right| < \varepsilon
   	\end{equation*}

\end{defn}

\begin{prop}
Let $\Phi$ be an interaction. For each $(x,y) \in \fT_X$, the series 
\begin{equation*}
\sum_{\Lambda \Subset G} [\Phi_{\Lambda}(x) - \Phi_{\Lambda}(y)]
\end{equation*}
converges in the same sense as the Hamiltonian series. Moreover, the function $\phi_{\Phi}: \fT_X \to \R$ defined by
\begin{equation*}
\phi_{\Phi}(x,y) = \sum_{\Lambda \Subset G} [\Phi_{\Lambda}(x) - \Phi_{\Lambda}(y)]
\end{equation*}
is a cocycle on $\fT_X$.
\end{prop}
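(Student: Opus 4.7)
The plan is to exploit the following key observation: if $(x,y) \in \fT_X$, there is some $\Lambda_0 \Subset G$ with $x_{\Lambda_0^c} = y_{\Lambda_0^c}$, and since each $\Phi_\Delta$ is $\cF_\Delta$-measurable, we have $\Phi_\Delta(x) = \Phi_\Delta(y)$ whenever $\Delta \subseteq \Lambda_0^c$, i.e., whenever $\Delta \cap \Lambda_0 = \emptyset$. Hence every nonzero term of the difference series is indexed by some $\Delta$ with $\Delta \cap \Lambda_0 \neq \emptyset$, which is exactly the index set of the Hamiltonian $H_{\Lambda_0}^\Phi$.

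First I would make this rigorous: for any $F' \Subset G$ with $F' \supseteq \Lambda_0$, one has
\[
\sum_{\Delta \subseteq F'} [\Phi_\Delta(x) - \Phi_\Delta(y)] = \sum_{\substack{\Delta \subseteq F' \\ \Delta \cap \Lambda_0 \neq \emptyset}} \Phi_\Delta(x) - \sum_{\substack{\Delta \subseteq F' \\ \Delta \cap \Lambda_0 \neq \emptyset}} \Phi_\Delta(y).
\]
Applying the definition of convergence of the Hamiltonian series for $H_{\Lambda_0}^\Phi(x)$ and $H_{\Lambda_0}^\Phi(y)$ and taking an $\varepsilon/2$ argument, the left side converges in the net sense (over $F' \Subset G$) to $H_{\Lambda_0}^\Phi(x) - H_{\Lambda_0}^\Phi(y)$. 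In particular the identity
\[
\phi_\Phi(x,y) = H_{\Lambda_0}^\Phi(x) - H_{\Lambda_0}^\Phi(y)
\]
holds whenever $\Lambda_0 \Subset G$ witnesses $(x,y) \in \fT_X$.

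Next I would verify Borel measurability of $\phi_\Phi$, which is the one point requiring a little care since the definition uses net convergence rather than absolute summability. Since net convergence along $\{F \Subset G\}$ implies convergence along any cofinal sequence, we have $H_{\Lambda}^\Phi(x) = \lim_{N \to \infty} \sum_{\Delta \subseteq \Lambda_N,\, \Delta \cap \Lambda \neq \emptyset} \Phi_\Delta(x)$ for the exhausting sequence $(\Lambda_N)$ fixed in \S\ref{def_sec}. Each finite partial sum is Borel, so $H_{\Lambda_N}^\Phi$ is Borel for every $N$. By the displayed identity above, $\phi_\Phi$ agrees on the Borel set $\fT_{X,N}$ with the Borel function $(x,y) \mapsto H_{\Lambda_N}^\Phi(x) - H_{\Lambda_N}^\Phi(y)$, and since $\fT_X = \bigcup_N \fT_{X,N}$, $\phi_\Phi$ is Borel on $\fT_X$.

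Finally, the cocycle identity is immediate: given $(x,y),(y,z) \in \fT_X$, choose $\Lambda_0$ large enough to witness both relations, so $(x,z) \in \fT_X$ is witnessed by $\Lambda_0$ as well. Then all three differences reduce to differences of Hamiltonians over $\Lambda_0$, and $[H_{\Lambda_0}^\Phi(x) - H_{\Lambda_0}^\Phi(y)] + [H_{\Lambda_0}^\Phi(y) - H_{\Lambda_0}^\Phi(z)] = H_{\Lambda_0}^\Phi(x) - H_{\Lambda_0}^\Phi(z)$ trivially. The only mildly delicate point in the whole argument is the measurability step, because the defining net convergence is not a priori sequential; passing to the cofinal sequence $(\Lambda_N)$ disposes of this.
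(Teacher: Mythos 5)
Your proof is correct and follows essentially the same route as the paper's: both reduce the difference series to $H_{\Lambda_0}^{\Phi}(x) - H_{\Lambda_0}^{\Phi}(y)$ by observing that terms indexed by $\Delta$ disjoint from the witnessing set vanish, then obtain the cocycle identity trivially from this energy-difference expression. Your additional verification that $\phi_{\Phi}$ is Borel (via cofinality of the exhausting sequence $(\Lambda_N)$ and agreement with $H_{\Lambda_N}^{\Phi}(x) - H_{\Lambda_N}^{\Phi}(y)$ on $\fT_{X,N}$) is a point the paper leaves implicit, and it is handled correctly.
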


\begin{proof}
Let $(x,y) \in \fT_X$ be such that $x_{\Delta^c} = y_{\Delta^c}$. We claim that
\begin{equation*}
\sum_{\Lambda \Subset G} [\Phi_{\Lambda}(x) - \Phi_{\Lambda}(y)] = H^{\Phi}_{\Delta}(x) - H^{\Phi}_{\Delta}(y)
\end{equation*}
with the equality understood in the sense of convergence discussed in the statement of the proposition. Indeed, choose $\varepsilon > 0$. By the definition of an interaction, there exists some $F \Subset G$ sufficiently large that whenever $F \subseteq F' \Subset G$, we have (noting that $\Phi_{E}(x) - \Phi_E(y) = 0$ when $E \cap \Delta = \emptyset$),
\begin{align*}
& \left|  [H_{\Delta}^{\Phi}(x) - H_{\Delta}^{\Phi}(y)] -  \sum_{E \subseteq F'} [\Phi_{E}(x) - \Phi_E(y) ]    \right| \\
\leq & \left|  H_{\Delta}^{\Phi}(x) - \sum_{\substack{E \subseteq F' \\ E \cap \Delta \neq \emptyset}} \Phi_{E}(x) \right| + \left|  H_{\Delta}^{\Phi}(y) -  \sum_{\substack{E \subseteq F' \\ E \cap \Delta \neq \emptyset}}  \Phi_E(y)   \right| \\
< & \, \varepsilon
\end{align*}
This establishes that the series converges, in the sense claimed, to a real number $\phi_{\Phi}(x,y) = H_{\Delta}^{\Phi}(x) - H_{\Delta}^{\Phi}(y)$. Moreover, this energy difference expression makes it obvious that $\phi_{\Phi}$ is a cocycle, concluding the proof.
\end{proof}

We now observe that the DLR equations for the cocycle $\phi_{\Phi}$, in the sense of Definition \ref{dlr-eq-cocycle}, are equivalent to the classical DLR equations for the interaction $\Phi$. Indeed, if $\mu$ is a DLR measure with respect to $\phi_{\Phi}$, then for any $\Lambda \Subset G$, any Borel $A \subseteq X$, and $\mu$-a.e. $x \in X$, we have
\begin{align*}
    \mu(A \,|\, \mathcal{F}_{\Lambda^c})(x) &= \sum_{\zeta \in \cA^{\Lambda}} \left[ \sum_{\eta \in \cA^{\Lambda}} \exp( \phi_{\Phi}(\zeta x_{\Lambda^c}, \eta x_{\Lambda^c})) \mathbf{1}_X(\zeta x_{\Lambda^c})   \right]^{-1} \mathbf{1}_A(\zeta x_{\Lambda^c}) \\
    &= \sum_{\zeta \in \cA^{\Lambda}} \left[ \sum_{\eta \in \cA^{\Lambda}} \exp \left( H_{\Lambda}^{\Phi}(\zeta x_{\Lambda^c}) - H_{\Lambda}^{\Phi}(\eta x_{\Lambda^c})  \right) \mathbf{1}_X(\eta x_{\Lambda^c})   \right]^{-1} \mathbf{1}_A(\zeta x_{\Lambda^c}) \\
    &= \frac{1}{Z_{\Lambda}^{\Phi}(x)} \sum_{\zeta \in \cA^{\Lambda}}  \exp \left( - H_{\Lambda}^{\Phi}(\zeta x_{\Lambda^c})  \right) \mathbf{1}_A(\zeta x_{\Lambda^c})
\end{align*}
where
\begin{equation*}
    Z_{\Lambda}^{\Phi}(x) = \sum_{\eta \in \cA^{\Lambda}} \exp \left(  - H_{\Lambda}^{\Phi}(\eta x_{\Lambda^c})  \right) \mathbf{1}_X(\eta x_{\Lambda^c})
\end{equation*}
By Theorem \ref{main-thm-abs}, if $\mu$ satisfies these (classical) DLR equations for $\Phi$, then $\mu$ is $(\phi_{\Phi}, \fT_X)$-conformal.


\section{Potentials}\label{potl_sec}

In this section and the next, we restrict to finitely generated groups $G$ satisfying a certain growth condition. We need this condition in order to construct a cocycle from a potential in a way that is compatible with interactions, in a sense to be made precise in \S\ref{potl_from_intrxn_sec}. The condition is as follows. It concerns the \textit{spherical growth function} $|B_k \setminus B_{k-1}|$, which is a basic quantity studied in geometric group theory, discussed for instance in (\cite{delaharpe2000groups}, \S VI.A).

\begin{defn}[bounded sphere ratios]
Let $G$ be a finitely generated group. With respect to a finite generating set $S \Subset G$, we can consider the open balls $B_k = \{ g \in G \, : \, d(g,e) < n   \}$ of radius $k$ centered at the identity in the Cayley graph of $G$ with respect to $S$. We say that a group $G$ has \textit{bounded sphere ratios} if there exists a finite generating set $S$ such that 
\[
\sup_{m \geq 1} \frac{|B_{m+1} \setminus B_{m}|}{|B_m \setminus B_{m-1}|} < +\infty.
\]
\end{defn}

In this section and the next, when we refer to balls in a group $G$ with bounded sphere ratios, we always mean balls with respect to a generating set that witnesses the bounded sphere ratios. Note also that if $G$ has bounded sphere ratios, then (for some generating set $S$) we have
\begin{equation*}
   \sup_{m \geq 1} \frac{|B_{m+n} \setminus B_{m+n-1}|}{|B_m \setminus B_{m-1}|} < +\infty.
\end{equation*}
for any $n$.

\begin{rmk}
A finitely generated group $G$ has \textit{polynomial growth} if $|B_n| \leq cn^d$ for some $c > 0, d \in \N$ and all $n$; \textit{exponential growth} if $|B_n| \geq  c\alpha^n$ for some $\alpha > 1, c > 0$ and all $n$; and \textit{intermediate growth} otherwise. Here we outline certain types of polynomial and exponential growth known to imply bounded sphere ratios.

In the polynomial case, recall that a group has polynomial growth  if and only if it is virtually nilpotent, i.e. has a finite-index nilpotent group \cite{gromov1981groups}. It is conjectured (\cite{breuillard-le-donne-2013-rate}, Conjecture 10) that for any nilpotent group, we have $|B_n| = cn^d + O(n^{d-1})$, where $c>0$ is a constant depending only on the group, with the coefficients of the lower-order terms depending on the generating set. This would imply (\cite{breuillard-le-donne-2013-rate}, Corollary 11) positive constant upper and lower bounds on the ratio $|B_k \setminus B_{k-1}|/k^{d-1}$, and thus that the group has bounded sphere ratios. What is known is more restricted. Associated to any nilpotent group is its \textit{nilpotency class}, a number measuring how far the group is from being abelian (abelian groups, like $\Z^d$, have class $1$). By a result of Stoll \cite{stoll-1998-two-step}, the conjectured asymptotics for $|B_n|$ hold at least for groups of nilpotency class at most $2$.

In the exponential case, we say that a group (with a given generating set) has \textit{exact exponential growth} if there exist $\alpha > 1$ and $0 < c < C < c\alpha$ with $c \leq |B_n|/\alpha^n \leq C$ for all $n \geq 1$. (This is not a standard definition.) This condition is satisfied, for example, by a free group with the usual generating set. To see that exact exponential growth implies bounded sphere ratios, note that
\[
\frac{|B_{k+1} - B_k|}{|B_k - B_{k-1}|} \leq \frac{C\alpha^{k+1} - c \alpha^k}{c \alpha^k - C \alpha^{k-1}} 
= \left( \frac{C\alpha - c}{c \alpha - C} \right) \alpha < \infty
\]
\vspace{1em}
\end{rmk}

We now turn our attention to potentials. For a function $f: X \to \R$ and $k \geq 1$, define the $k$th variation of $f$ as
\begin{equation*}
    v_{k} (f) := \sup \left\{ | f(y) - f(x) | \, \Big| \,  x, y \in X, \, x_{B_{k}} = y_{B_{k}}  \right\}.
\end{equation*}
We separately define $v_{0}(f) = \| f \|_{\infty}$. It is also convenient to define $B_{0} = \emptyset$. We define the \textit{shell norm} $\ShVar{\cdot}$ by
\begin{equation*}
    \ShVar{f} := \sum_{k = 0}^{\infty} | B_{k+1} \setminus B_k | v_{k} (f).
\end{equation*}
We define the space $\ShReg(X)$ as the space of \textit{shell-regular potentials}, i.e., functions $f:X \to \R$ with $\ShVar{f} < \infty$. It is elementary to show that shell-regularity implies continuity, and that $\ShReg(X)$, with the shell norm, is a Banach space. Note that this space depends, in general, on the generating set chosen.

\begin{rmk} In earlier work on subshifts over $\Z^d$ \cite{meyerovitch-2013-gibbs-eqm}, the relevant space of potentials is known as $\mathrm{SV}_d(X)$, the space of potentials with $d$-summable variation, defined by the norm $\| f \|_{SV_d} = \sum_{k=1}^{\infty} k^{d-1} v_{k-1}(f)$. This space is also known as $\mathrm{Reg}_{d-1}(X)$ \cite{muir2011gibbs}. With $B_n = \Z^d \cap [-n,n]^d$, we have $|B_{k+1} \setminus B_{k}| = 2^d d (1 + o(1)) k^{d-1}$. Thus, on $\Z^d$, we have $\ShReg(X) = \mathrm{SV}_d(X)$, with the identity a continuous linear map.
\end{rmk}

\begin{prop}\label{cocyclepot}
Let $G$ be a group with bounded sphere ratios and $X$ a subshift over $G$. For any $f \in \ShReg(X)$ and any $(x,y) \in \fT_X$, the series
\begin{equation*}
\sum_{g \in G} [f(g \cdot y) - f(g \cdot x)]
\end{equation*}
converges absolutely and defines a cocycle $\phi_f$ on $\fT_X$.
\end{prop}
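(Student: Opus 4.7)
The plan is to bound the series term by term via the variations $v_k(f)$, after partitioning $G$ according to how well $g \cdot x$ and $g \cdot y$ agree on balls around the identity. Since $(x,y) \in \fT_X$, fix $\Lambda \Subset G$ with $x_{\Lambda^c} = y_{\Lambda^c}$; the case $\Lambda = \emptyset$ gives $x = y$ and the series is identically zero, so assume $\Lambda \neq \emptyset$. Because the shift action satisfies $(g \cdot x)_h = x_{g^{-1}h}$, the words $(g \cdot x)_{B_k}$ and $(g \cdot y)_{B_k}$ agree precisely when $g^{-1} B_k \cap \Lambda = \emptyset$. For each $g \in G$, let
\[
k(g) = \max\{ k \geq 0 : g^{-1}B_k \cap \Lambda = \emptyset \},
\]
which is finite because the balls $B_k$ exhaust $G$ and $\Lambda \neq \emptyset$. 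By definition of the variations, $|f(g \cdot y) - f(g \cdot x)| \leq v_{k(g)}(f)$ when $k(g) \geq 1$, and the trivial bound $|f(g\cdot y) - f(g\cdot x)| \leq 2\|f\|_\infty = 2 v_0(f)$ handles the remaining case.

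The next step is a combinatorial count. If $k(g) = k$, then $g^{-1}B_{k+1} \cap \Lambda \neq \emptyset$, so $g = b \lambda^{-1}$ for some $b \in B_{k+1}$ and $\lambda \in \Lambda$; moreover $b \notin B_k$, since otherwise $\lambda$ would lie in $g^{-1}B_k \cap \Lambda$, contradicting $k(g) = k$. Thus $\{g \in G : k(g) = k\} \subseteq (B_{k+1} \setminus B_k)\Lambda^{-1}$, which gives $|\{g : k(g) = k\}| \leq |\Lambda| \cdot |B_{k+1} \setminus B_k|$. Summing over $k$ and using the definition of the shell norm yields
\[
\sum_{g \in G} |f(g \cdot y) - f(g \cdot x)| \leq 2 |\Lambda| \sum_{k=0}^{\infty} |B_{k+1} \setminus B_k| \, v_k(f) = 2 |\Lambda| \cdot \ShVar{f} < \infty,
\]
which gives absolute convergence and hence a well-defined real number $\phi_f(x,y)$.

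The cocycle identity then follows immediately from absolute convergence: for $(x,y), (y,z) \in \fT_X$, rearranging the two absolutely convergent series gives $\phi_f(x,y) + \phi_f(y,z) = \phi_f(x,z)$. Borel measurability of $\phi_f$ on $\fT_X$ is automatic, since each finite partial sum $\sum_{g \in F} [f(g \cdot y) - f(g \cdot x)]$ is continuous and the partial sums converge pointwise on $\fT_X$. The only delicate point is the combinatorial containment $\{g : k(g) = k\} \subseteq (B_{k+1} \setminus B_k)\Lambda^{-1}$, which is the heart of the argument. Notably, the bounded sphere ratios hypothesis plays no role in this proposition, and should enter only when potentials are compared against interactions in \S \ref{potl_from_intrxn_sec}.
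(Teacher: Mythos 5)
Your proof is correct, and it takes a genuinely different route from the paper's --- one that is in fact stronger. The paper's argument lower-bounds the agreement radius of $g \cdot x$ and $g \cdot y$ by the triangle inequality: for $g$ in the shell $B_k \setminus B_{k-1}$ with $k > n$ (where $x_{B_n^c} = y_{B_n^c}$), the term is at most $v_{k-n-1}(f)$, so the tail is bounded by $\sum_{k > n} |B_k \setminus B_{k-1}|\, v_{k-n-1}(f)$. The index shift by $n$ between the shell sizes and the variations is precisely where bounded sphere ratios enters: the paper multiplies by $\sup_{k \geq 1} |B_{k+n} \setminus B_{k+n-1}|/|B_k \setminus B_{k-1}|$ (finite by iterating the hypothesis) to recover $\ShVar{f}$. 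You instead partition $G$ by the \emph{exact} agreement radius $k(g)$ and count each piece by the translation trick $\{g : k(g) = k\} \subseteq (B_{k+1} \setminus B_k)\Lambda^{-1}$, which pairs the $k$-th variation with the $k$-th shell with no index shift and gives the clean bound $2|\Lambda| \ShVar{f}$. As a result, your argument makes no use of the bounded sphere ratios hypothesis, as you note, so it establishes the proposition for any finitely generated group with any finite generating set defining the balls; the hypothesis remains genuinely needed elsewhere in the paper (e.g., Proposition \ref{full-dim-shreg} and the comparison with interactions), just not for this statement. What each approach buys: the paper's is the more immediate geometric estimate but pays for it with the growth hypothesis and a constant depending on the sphere-ratio supremum; yours requires the slightly more careful combinatorial containment but costs nothing in hypotheses. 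One small wording fix: ``agree precisely when $g^{-1}B_k \cap \Lambda = \emptyset$'' should be ``agree whenever,'' since $x$ and $y$ may happen to agree at points of $\Lambda$ as well; your argument only uses the correct (sufficient) direction, so nothing breaks.
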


\begin{proof}
Fix $(x,y) \in \fT_X$, and let $n \geq 1$ be such that $x_{B_n^c} = y_{B_n^c}$. If $g \in G$ and $m \geq 1$ are such that $B_{m-1} \subseteq g^{-1} B_n^c$, then $(g \cdot x)|_{B_{m-1}} = (g \cdot y)|_{B_{m-1}}$ so $|f(g \cdot y) - f(g \cdot x)| \leq v_{m-1}(f)$. For $m \geq 1$ and $g \in B_k \setminus B_{k-1}$, the triangle inequality guarantees that $g B_{m-1} \subseteq B_n^c$ if $k-n \geq m$. Since the shells $B_{k+1} \setminus B_k$ partition $G$, we have
\begin{align*}
\sum_{g \in G} | f(g \cdot y) - f(g \cdot x)| &\leq 2 | B_n | \| f \|_{\infty} +  \sum_{k=n+1}^{\infty} | B_k \setminus B_{k-1} | v_{k-n - 1}(f) \\
&\leq 2 | B_n | \| f \|_{\infty} + \left( \sup_{k \geq 1} \frac{|B_{k+n} \setminus B_{k+n-1}|}{|B_{k} \setminus B_{k-1}|} \right) \ShVar{f}
\end{align*}
so indeed the cocycle is well-defined by an absolutely convergent series.
\end{proof} 


Just as in the case of an interaction, this expression for the cocycle $\phi_f$ allows us to rewrite the DLR equations in a more classical form. Let $f \in \ShReg(X)$. It follows from a simple manipulation that for any $(x, y) \in \fT_X$, we have
\begin{equation*}
    \exp(\phi_f(x,y)) = \lim_{m \to +\infty} \exp \left(\sum_{g \in B_m} [f(g \cdot y) - f(g \cdot x)] \right) = \lim_{m\to +\infty} \frac{\exp f_m(y)}{\exp f_m(x)}.
\end{equation*}
where $f_m(z) = \sum_{g \in B_m} f(g \cdot z)$. Now, let $A \subseteq X$ be a Borel set. If $\mu$ is a DLR measure with respect to $\phi_f$, then for $\mu$-a.e. $x \in X$, we have
\begin{align*}
    \mu(A \, | \, \cF_{\Lambda^c}) &= \sum_{\eta \in \cA^{\Lambda}} \left[ \sum_{\zeta \in \cA^{\Lambda}} \exp( \phi_f(\eta x_{\Lambda^c}, \zeta x_{\Lambda^c})) \mathbf{1}_X(\zeta x_{\Lambda^c})   \right]^{-1} \mathbf{1}_A(\eta x_{\Lambda^c}) \\
    &= \sum_{\eta \in \cA^{\Lambda}} \left[ \sum_{\zeta \in \cA^{\Lambda}} \lim_{m\to +\infty} \frac{\exp f_m(\zeta x_{\Lambda^c})}{\exp f_m(\eta x_{\Lambda^c})}\mathbf{1}_X(\zeta x_{\Lambda^c})   \right]^{-1} \mathbf{1}_A(\eta x_{\Lambda^c})\\
    &= \lim_{m \to \infty} \frac{ \sum_{\eta \in \mathcal{A}^{\Lambda}} \exp \left( {f_m(\eta x_{\Lambda^c})} \right)
\mathbf{1}_A(\eta x_{\Lambda^c} )}
{\sum_{\zeta \in \mathcal{A}^{\Lambda} } \exp \left( {f_m(\zeta x_{\Lambda^c})} \right) \mathbf{1}_X(\zeta x_{\Lambda^c}) }
\end{align*}
These are the DLR equations as found in Kimura \cite{kimura-2015-thesis}. Applying Theorem \ref{main-thm-abs} therefore shows that any DLR measure with respect to a potential $f \in \ShReg(X)$ is necessarily $(\phi_f, \fT_X)$-conformal, providing the full converse for Kimura's result described in the introduction.


\section{Potentials induced by interactions, and vice versa}\label{potl_from_intrxn_sec}

We have seen that the DLR property implies the conformal property for an arbitrary cocycle on the Gibbs relation, with Gibbs measures for interactions and for potentials as two special cases. These cases are not independent. In this section, we adapt the methods and results of Muir \cite{muir2011gibbs} and Ruelle \cite{ruelle-2004-thermo} to construct potentials from interactions and vice versa. In this section, all interactions are translation-invariant, i.e., for any $\Lambda \Subset G$ and any $x \in X$, we require that $\Phi_{g \Lambda} (g \cdot x) = \Phi_{\Lambda}(x)$. We recall a classical space of particularly well-behaved interactions:

\begin{defn}
For an interaction $\Phi$, let
\begin{equation*}
\| \Phi \|_B = \sum_{\substack{\Lambda \Subset G \\ e \in \Lambda}} \| \Phi_{\Lambda} \|_{\infty}
\end{equation*}
We define $\cB$ as the space of \textit{absolutely summable} $\Phi$, i.e., those for which $\| \Phi \|_B < \infty$.
\end{defn}

It is routine to check that $(\cB, \| \cdot \|_B)$ is a Banach space. Moreover, for$\Phi \in \cB$, we in fact have absolute convergence of the series defining the cocycle $\phi_{\Phi}$, since for any $(x,y) \in \fT_X$ with $x_{\Delta^c} = y_{\Delta^c}$ for some $\Delta \Subset G$, we have
\begin{align*}
\sum_{\Lambda \Subset G} | \Phi_{\Lambda}(x) - \Phi_{\Lambda}(y)| &\leq 2 \sum_{\substack{\Lambda \Subset G \\ \Lambda \cap \Delta \neq \emptyset}} \| \Phi_{\Lambda} \|_{\infty} \\
&\leq 2 | \Delta | \sum_{\substack{\Lambda \Subset G \\ e \in \Lambda}} \| \Phi_{\Lambda} \|_{\infty} \\
&= 2 | \Delta | \| \Phi \|_B < \infty
\end{align*}

We introduce a family of linear maps that convert interactions into potentials.

\begin{defn}[translate-weighting maps]
Let $(a_{\Lambda})_{\Lambda \Subset G, \, e \in \Lambda}$ be a collection of nonnegative real coefficients such that, for each $\Lambda \Subset G$ with $e \in \Lambda$, we have $\sum_{g \in \Lambda} a_{g^{-1} \Lambda} = 1$. Then, for an interaction $\Phi$, define the potential $A_{\Phi}$ via 
\begin{equation*}
A_{\Phi}(x) = - \sum_{\substack{\Lambda \Subset G \\ e \in \Lambda}} a_{\Lambda} \Phi_{\Lambda}(x)
\end{equation*}

The map $\Phi \mapsto A_{\Phi}$ is clearly linear. We refer to this map as the translate-weighting map determined by the weights $(a_{\Lambda})_{\Lambda \Subset G, e \in \Lambda}$.
\end{defn}

\begin{rmk}
Two important examples are the following.
\begin{itemize}
\item The \textit{uniform map}, where $a_{\Lambda} \in 
\{ 0, \frac{1}{|\Lambda|} \}$ for every nonempty $\Lambda \Subset G$. Muir uses the letter $A$ to denote this specific operator, i.e., $A(\Phi) = A_{\Phi}$.

\item The class of \textit{dictator maps}, where $a_{\Lambda} \in \{ 0, 1 \}$ for every $\Lambda \Subset G$. For instance, on $\Z^d$, Ruelle studies the operator for which $a_{\Lambda} = 1$ if and only if $0$ is the middle element, or more precisely the $\lfloor (|\Lambda| + 1)/2 \rfloor$-th element, of $\Lambda$ in lexicographic order. In \cite{muir2011gibbs}, Muir refers to this operator as $\hat{A}$.
\end{itemize}

\end{rmk}

In Fact 7.8 in \cite{muir2011gibbs}, it is claimed that $A_{\Phi} \in \ShReg(X)$ for every translate-weighting map and every $\Phi \in \cB$. This claim is incorrect, as we demonstrate with an example below. However, the argument presented for this claim is correct in the case of what Muir calls ``cubic-type'' interactions. Here we reproduce a version of this proof for a larger class of interactions.

\begin{defn}
An interaction $\Phi$ is \textit{full-dimensional} if there exists some $C > 0$ such that, for all $\Lambda \Subset G$ with $e \in \Lambda$ and $\Phi_{\Lambda} \not\equiv 0$, we have the bound
\begin{equation*}
\sup \{  |B_n| : \, n \in \N, \, \Lambda \cap B_{n-1}^c \neq \emptyset   \} \leq C |\Lambda|
\end{equation*}
\end{defn}

\begin{prop}\label{full-dim-shreg} Let $G$ be a group with bounded sphere ratios and let $X$ be a subshift over $G$.
If $\Phi \in \cB$ is full-dimensional, then $A_{\Phi} \in \ShReg(X)$, where $A_{\Phi}$ is the image of $\Phi$ under an arbitrary translate-weighting map.
\end{prop}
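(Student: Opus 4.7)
The plan is to bound $\ShVar{A_\Phi}$ directly: first estimate each $v_k(A_\Phi)$ by identifying which shapes can contribute, then exchange the order of summation to invoke the full-dimensional hypothesis, and finally use translation invariance together with the defining constraint of the weighting to collapse the resulting sum back to $\|\Phi\|_B$.

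For the first step, since each $\Phi_\Lambda$ is $\cF_\Lambda$-measurable, it contributes nothing to $A_\Phi(x) - A_\Phi(y)$ whenever $x_{B_k} = y_{B_k}$ and $\Lambda \subseteq B_k$. Writing $r(\Lambda) := \max_{g \in \Lambda} d(g,e)$, so that $\Lambda \subseteq B_k$ if and only if $r(\Lambda) < k$, this gives
$$v_k(A_\Phi) \leq 2 \sum_{\substack{\Lambda \ni e \\ r(\Lambda) \geq k}} a_\Lambda \|\Phi_\Lambda\|_\infty$$
for every $k \geq 0$, which also establishes absolute convergence of the series defining $A_\Phi$ (using $a_\Lambda \leq 1$). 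Substituting into the shell norm and swapping the order of summation yields
$$\ShVar{A_\Phi} \leq 2 \sum_{\Lambda \ni e} a_\Lambda \|\Phi_\Lambda\|_\infty \sum_{k=0}^{r(\Lambda)} |B_{k+1} \setminus B_k| = 2 \sum_{\Lambda \ni e} a_\Lambda \|\Phi_\Lambda\|_\infty \, |B_{r(\Lambda)+1}|,$$
and the full-dimensional hypothesis, which says precisely that $|B_{r(\Lambda)+1}| \leq C|\Lambda|$ whenever $\Phi_\Lambda \not\equiv 0$, converts this into
$$\ShVar{A_\Phi} \leq 2C \sum_{\Lambda \ni e} a_\Lambda |\Lambda| \|\Phi_\Lambda\|_\infty.$$

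The main obstacle, and the step requiring some care, is to recognize this final sum as controlled by $\|\Phi\|_B$. The individual weights $a_\Lambda$ can be as large as $1$ (e.g.\ for a dictator map), so the factor of $|\Lambda|$ cannot be absorbed pointwise; however, it can be absorbed in aggregate using translation invariance. Indeed, $\|\Phi_{h^{-1}\Lambda}\|_\infty = \|\Phi_\Lambda\|_\infty$ for every $h \in \Lambda$, so $|\Lambda| \|\Phi_\Lambda\|_\infty = \sum_{h \in \Lambda} \|\Phi_{h^{-1}\Lambda}\|_\infty$. Reindexing the double sum with $\Lambda' = h^{-1}\Lambda$ (which contains $e$, since $h \in \Lambda$) and $g = h^{-1} \in \Lambda'$, one obtains
$$\sum_{\Lambda \ni e} a_\Lambda |\Lambda| \|\Phi_\Lambda\|_\infty = \sum_{\Lambda' \ni e} \|\Phi_{\Lambda'}\|_\infty \sum_{g \in \Lambda'} a_{g^{-1}\Lambda'}.$$
The inner sum equals $1$ by the defining constraint of a translate-weighting map, so the right-hand side collapses to $\|\Phi\|_B$, giving $\ShVar{A_\Phi} \leq 2C \|\Phi\|_B < \infty$ and hence $A_\Phi \in \ShReg(X)$.
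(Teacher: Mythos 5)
Your proof is correct and follows essentially the same route as the paper's: bound $v_k(A_\Phi)$ by the shapes meeting $B_k^c$, exchange the order of summation, apply the full-dimensional hypothesis to get the factor $C|\Lambda|$, and then collapse $\sum_{\Lambda \ni e} a_\Lambda |\Lambda| \|\Phi_\Lambda\|_\infty$ to $\|\Phi\|_B$ using translation invariance and the constraint $\sum_{g \in \Lambda} a_{g^{-1}\Lambda} = 1$. Your final reindexing of the double sum via $(\Lambda, h) \mapsto (h^{-1}\Lambda, h^{-1})$ is exactly the computation the paper organizes through its partition into translate classes $T(\Lambda)$, and your identification of the inner sum as $|B_{r(\Lambda)+1}|$ is a slightly cleaner rendering of the paper's supremum expression.
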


\begin{proof}
We first estimate $v_{k-1}(A_{\Phi})$:
\begin{align*}
v_{k-1}(A_{\Phi}) &= \sup \left\{ \left| \sum_{\substack{\Lambda \Subset G \\ e \in \Lambda} } a_{\Lambda} [ \Phi_{\Lambda}(x) - \Phi_{\Lambda}(y) ] \right| \, : \, x, y \in X, \, x_{B_{k-1}} = y_{B_{k-1}}  \right\} \\
%
%
&\leq 2 \sum_{\substack{\Lambda \Subset G \\ e \in \Lambda \\ \Lambda \cap B_{k-1}^c \neq \emptyset} } a_{\Lambda} \| \Phi_{\Lambda} \|_{\infty}
\end{align*}

We can now estimate the shell norm by an exchange of summations:
\begin{align*}
\ShVar{A_{\Phi}} &\leq 2 \sum_{k=0}^{\infty} |B_{k+1} \setminus B_{k} | \sum_{\substack{\Lambda \Subset G \\ e \in \Lambda \\ \Lambda \cap B_{k}^c \neq \emptyset} } a_{\Lambda} \| \Phi_{\Lambda} \|_{\infty} \\
&= 2 \sum_{\substack{ \Lambda \Subset G \\ e \in \Lambda}} a_{\Lambda} \| \Phi_{\Lambda} \|_{\infty}  \sum_{\substack{k \geq 0 \\ \Lambda \cap B_{k}^c \neq \emptyset}} |B_{k+1} \setminus B_{k}|  
\end{align*}

Observe that 
\begin{equation*}
\sum_{\substack{k \geq 0 \\ \Lambda \cap B_{k-1}^c \neq \emptyset}} |B_{k+1} \setminus B_{k}|  = \sup \{  |B_n| : \, n \in \N, \, \Lambda  \cap B_{n}^c \neq \emptyset   \} \leq C | \Lambda|
\end{equation*}
so in fact
\begin{equation*}
\ShVar{A_{\Phi}} \leq 2C \sum_{\substack{ \Lambda \Subset G \\ e \in \Lambda}}  a_{\Lambda} |\Lambda| \| \Phi_{\Lambda} \|_{\infty}
\end{equation*}

We need to rearrange this sum. For a given $\Lambda \Subset G$, consider the set of translates of $\Lambda$ containing the identity, denoted $T(\Lambda) = \{ g^{-1} \Lambda, \, g \in \Lambda \}$. For instance, in $\Z$, if $\Lambda=\{0,1\}$, then $T(\Lambda)= \{ \{ -1,0 \}, \{ 0,1 \}  \}$. Let $\cT$ denote the set of such sets of translates, i.e., $\cT = \{ T(\Lambda) \, : \, \Lambda \Subset G, \, e \in \Lambda \}$. Note that $\cT$ is a partition of the set $\{ \Lambda \Subset G, \, e \in \Lambda  \}$. Observe furthermore that $|T| = |\Lambda|$ for any $\Lambda \in T$.

For any given $T \in \cT$, the value $|\Lambda| \| \Phi_{\Lambda} \|_{\infty}$ is the same for any $\Lambda \in T$, i.e., any $\Lambda$ such that $T = T(\Lambda)$. so we denote it by $c_T$. We can then express the bound on $\ShVar{A_{\Phi}}$ by summing over $T \in \cT$, as follows:
\begin{align*}
\sum_{\substack{ \Lambda \Subset G \\ e \in \Lambda}} a_{\Lambda} |\Lambda|  \| \Phi_{\Lambda} \|_{\infty} &= \sum_{T \in \cT} \sum_{\Lambda \in T} a_{\Lambda}  c_T \\
&= \sum_{T \in \cT} c_T \sum_{\Lambda \in T} a_{\Lambda}  \\
&= \sum_{T \in \cT} c_T \\
&= \sum_{T \in \cT} |\Lambda| \| \Phi_{\Lambda} \|_{\infty} \\
&= \sum_{T}  \sum_{\Lambda \in T} \| \Phi_{\Lambda} \|_{\infty} \\
&= \| \Phi \|_B
\end{align*}

Thus $\ShVar{A_{\Phi}} \leq 2C \| \Phi \|_B < \infty$. 
\end{proof}

The following example shows that if $\Phi \in \cB$ is not full-dimensional, then $A_{\Phi}$ can fail to be shell-regular.
\begin{example}
Let $X = \{0,1\}^{\mathbb{Z}}$, with $B_k=(-k,k) \cap \Z$. Define $\Phi = (\Phi_{\Lambda})_{\Lambda \Subset \Z}$ as follows: for any $i, j \in \mathbb{Z}$, $\Phi_{\{ i,j \} }(x) = \frac{1}{(j-i)^2}$ if $x_i = x_j = 1$ and $0$ otherwise; and $\Phi_{\Lambda} \equiv 0$ for all other $\Lambda \Subset G$. Clearly $\Phi$ is translation-invariant. We claim that $\Phi \in \cB$ but $A_{\Phi} \notin \ShReg(X)$, where $A_{\Phi}$ is the image of $\Phi$ under the dictator map that ignores $\Lambda \Subset \Z$ unless $0 = \inf \Lambda$. Indeed, $\| \Phi \|_{\mathcal{B}} = 2 \sum_{j=1}^{\infty} \frac{1}{j^2} < \infty$, but
\begin{equation*}
    v_k(A_{\Phi}) = \sum_{l = k}^{\infty}\frac{1}{l^2}\geq \frac{1}{k}
\end{equation*}
which implies that
\begin{equation*}
\ShVar{A_{\Phi}} \geq 2 \sum_{k=1}^{+\infty} \frac{1}{k} = +\infty
\end{equation*}

\end{example}


The next two propositions establish that for any full-dimensional interaction $\Phi \in \cB$, the images $A_{\Phi}$ and $A'_{\Phi}$ of $\Phi$ under any two translate-weighting maps have all the same Gibbs and equilibrium measures.

\begin{prop}\label{same_cocycle} Let $G$ be a group with bounded sphere ratios, let $X$ be a subshift on $G$, and let $\Phi$ be an absolutely summable, full-dimensional interaction on $X$. Then $\Phi$ and $A_{\Phi}$ induce the same cocycle, i.e., $\phi_{A_{\Phi}} = \phi_{\Phi}$, where $A_{\Phi}$ is the image of $\Phi$ under an arbitrary translate-weighting map.
\end{prop}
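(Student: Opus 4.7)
The plan is to expand both sides of $\phi_{A_\Phi} = \phi_\Phi$ as sums, then interchange the order of summation and apply the normalization of the weights $a_\Lambda$.

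Starting from the definition of $\phi_{A_\Phi}$ in Proposition~\ref{cocyclepot} and substituting the definition of $A_\Phi$, translation invariance $\Phi_{g\Lambda}(g \cdot x) = \Phi_\Lambda(x)$ gives $\Phi_\Lambda(g \cdot x) = \Phi_{g^{-1}\Lambda}(x)$, and therefore
\begin{equation*}
\phi_{A_\Phi}(x,y) = \sum_{g \in G} \sum_{\substack{\Lambda \Subset G \\ e \in \Lambda}} a_\Lambda \bigl[ \Phi_{g^{-1}\Lambda}(x) - \Phi_{g^{-1}\Lambda}(y) \bigr].
\end{equation*}
The outer sum over $g$ converges absolutely by Proposition~\ref{cocyclepot} combined with Proposition~\ref{full-dim-shreg}, which ensures $A_\Phi \in \ShReg(X)$.

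I would next show that the double sum is absolutely convergent, which licenses swapping the order of summation. Fix $\Delta \Subset G$ with $x_{\Delta^c}=y_{\Delta^c}$. For a given pair $(g,\Lambda)$, the summand vanishes unless $g^{-1}\Lambda$ meets $\Delta$, equivalently $g \in \Lambda \Delta^{-1}$, and this set has at most $|\Lambda||\Delta|$ elements. Bounding each nonzero term by $2 a_\Lambda \|\Phi_\Lambda\|_\infty$ and reusing the translate-equivalence-class rearrangement from the proof of Proposition~\ref{full-dim-shreg}, the total is at most $2|\Delta|\|\Phi\|_B < \infty$.

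After the interchange, I would change the summation variable to $\Delta = g^{-1}\Lambda$: given a nonempty $\Delta \Subset G$, the constraint $e \in \Lambda$ together with $g^{-1}\Lambda = \Delta$ is parametrized by $h := g^{-1} \in \Delta$ via $\Lambda = h^{-1}\Delta$. The double sum then becomes
\begin{equation*}
\sum_{\emptyset \neq \Delta \Subset G} \bigl[\Phi_\Delta(x)-\Phi_\Delta(y)\bigr] \sum_{h \in \Delta} a_{h^{-1}\Delta}.
\end{equation*}
The inner sum equals $1$ for every nonempty $\Delta$: pick any $k \in \Delta$, apply the hypothesis $\sum_{g \in \Lambda} a_{g^{-1}\Lambda} = 1$ to $\Lambda = k^{-1}\Delta$ (which contains $e$), and substitute $h = kg$. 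The right-hand side is then $\phi_\Phi(x,y)$, as desired.

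The main obstacle is careful bookkeeping across the change of variables: aligning the constraint $e \in \Lambda$ in the original indexing with the fact that $\Delta$ ranges over all nonempty finite subsets in the final indexing, and extending the normalization condition from sets containing $e$ to arbitrary nonempty $\Delta$. The analytic input, namely absolute convergence permitting the interchange of summations, is essentially the same estimate already developed in the proof of Proposition~\ref{full-dim-shreg}.
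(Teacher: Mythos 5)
Your proof is correct and takes essentially the same approach as the paper: expand $\phi_{A_\Phi}$ as a double sum over pairs $(g,\Lambda)$, interchange the order of summation, and collapse the inner sum to $1$ using the normalization $\sum_{g \in \Lambda} a_{g^{-1}\Lambda} = 1$ (which, as you note, transfers to arbitrary translates of a set not containing $e$). The differences are only bookkeeping: the paper performs the substitution $\Lambda' = g^{-1}\Lambda$ before interchanging rather than after, its justification of the interchange is a one-line appeal to absolute convergence where yours gives the explicit bound $2|\Delta|\,\|\Phi\|_B$ via the counting argument and the translate-class rearrangement, and you should rename your new summation variable, since $\Delta$ is already in use as the set outside which $x$ and $y$ agree.
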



\begin{proof}
Suppose that $(x,y) \in \fT_X$ with $x_{\Delta^c} = y_{\Delta^c}$. Observe that
\[
\phi_{\Phi}(x,y) = \sum_{\substack{\Lambda \Subset G \\ \Lambda \cap \Delta \neq \emptyset}} \left[ \Phi_{\Lambda}(x) - \Phi_{\Lambda}(y) \right]
\]
Consider a translate-weighting map with weights $a_{\Lambda}$. To compute $\phi_{A_{\Phi}}$, we first obtain a convenient expression for $A_{\Phi}(g \cdot x) - A_{\Phi}(g \cdot y)$:
\begin{align*}
    A_{\Phi}(g \cdot x) - A_{\Phi}(g \cdot y) &= - \sum_{\substack{\Lambda \Subset G \\ e \in \Lambda \\ \Lambda \cap g \Delta \neq \emptyset}} a_{\Lambda} \Phi_{g^{-1} \Lambda} (x) +  \sum_{\substack{\Lambda \Subset G \\ e \in \Lambda \\ \Lambda \cap g \Delta \neq \emptyset}} a_{\Lambda} \Phi_{g^{-1} \Lambda} (y)\\
    &= - \sum_{\substack{\Lambda' \Subset G \\ g \in \Lambda' \\ \Lambda' \cap \Delta \neq \emptyset}} a_{g^{-1} \Lambda'} [\Phi_{\Lambda'} (x)  - \Phi_{\Lambda'} (y)]
\end{align*}
We then compute:
\begin{align*}
\phi_{A_{\Phi}}(x,y) &= \sum_{g \in G} [A_{\Phi}(g \cdot x) - A_{\Phi}(g \cdot y)]\\
&= \sum_{g \in G} \sum_{\substack{\Lambda \Subset G \\ g \in \Lambda \\ \Lambda \cap \Delta \neq \emptyset}}  a_{g^{-1} \Lambda} [ \Phi_{\Lambda}(x) - \Phi_{\Lambda} (y) ] \\
&= \sum_{\substack{\Lambda \Subset G\\ \Lambda \cap \Delta \neq \emptyset}} \left( \sum_{g \in \Lambda} a_{g^{-1} \Lambda} \right) [ \Phi_{\Lambda}(x) - \Phi_{\Lambda} (y) ] \\
&= \phi_{\Phi}(x,y)
\end{align*}

The interchange of summations is justified by the absolute convergence of the series defining the cocycles $\phi_{A_{\Phi}}$ and $\phi_{\Phi}$, implied by the regularity of $\Phi$ and $A_{\Phi}$.
\end{proof}

Proposition \ref{same_cocycle} is similar to Theorem 5.42 in \cite{kimura-2015-thesis}, which is stated for Ruelle's operator $A$, using specifications rather than cocycles.

\begin{prop}
Let $G$ be a group with bounded sphere ratios and let $X$ be a subshift on $G$. Let $\mu$ be a $G$-invariant measure on $X$, let $\Phi \in \cB$ be full-dimensional. Let $A_{\Phi}$ be the image of $\Phi$ under a translate-weighting map with weights $(a_{\Lambda})_{\Lambda \Subset G, \, e \in \Lambda}$. Then the integral $\int_X A_{\Phi} \, d\mu$ depends only on $\Phi$ and $\mu$, and not on the weights $a_{\Lambda}$.
\end{prop}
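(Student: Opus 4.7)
The plan is to give an expression for $\int_X A_\Phi \, d\mu$ that depends only on $\Phi$ and $\mu$, with no reference to the weights $(a_\Lambda)_{\Lambda \Subset G, \, e \in \Lambda}$; since the same expression applies to every translate-weighting map, the integral will be weight-independent as claimed.

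The key observation is that, with $\cT$ the translate-class partition of $\{\Lambda \Subset G : e \in \Lambda\}$ introduced in the proof of Proposition \ref{full-dim-shreg}, the integral $\int_X \Phi_\Lambda \, d\mu$ depends only on the class $T \in \cT$ of $\Lambda$, not on $\Lambda$ itself. Indeed, if $\Lambda' = g^{-1}\Lambda$ is another representative of $T$, then translation-invariance of $\Phi$ gives $\Phi_{\Lambda'}(x) = \Phi_{g^{-1}\Lambda}(x) = \Phi_\Lambda(g \cdot x)$, and $G$-invariance of $\mu$ then yields $\int_X \Phi_{\Lambda'} \, d\mu = \int_X \Phi_\Lambda \, d\mu$. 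Denote this common value by $I_T$.

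Next, I would substitute the definition of $A_\Phi$ and regroup the resulting sum by translate classes:
\[
\int_X A_\Phi \, d\mu = -\sum_{\substack{\Lambda \Subset G \\ e \in \Lambda}} a_\Lambda \int_X \Phi_\Lambda \, d\mu = -\sum_{T \in \cT} I_T \sum_{\Lambda \in T} a_\Lambda.
\]
The interchange of sum and integral, and the rearrangement of the outer sum, are justified by absolute convergence: $a_\Lambda \in [0,1]$ and $\sum_{\Lambda \ni e} \|\Phi_\Lambda\|_\infty = \|\Phi\|_B < \infty$. Applying the defining constraint $\sum_{g \in \Lambda_0} a_{g^{-1}\Lambda_0} = 1$ for any fixed $\Lambda_0 \in T$, which translates into $\sum_{\Lambda \in T} a_\Lambda = 1$ via the bijection $g \mapsto g^{-1}\Lambda_0$ already invoked in Proposition \ref{full-dim-shreg}, the expression collapses to
\[
\int_X A_\Phi \, d\mu = -\sum_{T \in \cT} I_T,
\]
whose right-hand side depends on $\Phi$ and $\mu$ alone.

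I do not expect any real obstacle; the main technical points---absolute convergence and the identity $\sum_{\Lambda \in T} a_\Lambda = 1$---are both immediate from machinery already set up in Propositions \ref{full-dim-shreg} and \ref{same_cocycle}. It is worth noting that full-dimensionality of $\Phi$ is not actually used in this argument; the hypothesis $\|\Phi\|_B < \infty$ alone suffices, since it already makes $A_\Phi$ a bounded measurable function and renders all the sums involved absolutely convergent uniformly in $x$.
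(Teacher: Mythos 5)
Your proof is correct and follows the same main line as the paper's: regroup the sum defining $A_{\Phi}$ by translate classes $T \in \cT$, use translation-invariance of $\Phi$ together with $G$-invariance of $\mu$ to see that $\int_X \Phi_{\Lambda}\,d\mu$ is constant on each class, and collapse the inner sum via the weight constraint. The one genuine difference is how the sum--integral interchange is justified. The paper does this by applying Proposition \ref{full-dim-shreg} to the interaction $|\Phi|$ (defined by $|\Phi|_{\Lambda} = |\Phi_{\Lambda}|$), concluding $A_{|\Phi|} \in \ShReg(X)$ --- a step that genuinely uses full-dimensionality. You instead use the elementary bound $a_{\Lambda} \le 1$ (which follows from keeping only the $g = e$ term of the constraint $\sum_{g \in \Lambda} a_{g^{-1}\Lambda} = 1$) together with $\sum_{\Lambda \ni e} \|\Phi_{\Lambda}\|_{\infty} = \|\Phi\|_B < \infty$, so that Fubini--Tonelli applies directly; this is both simpler and strictly more general, and your closing observation is right: full-dimensionality (and even the bounded-sphere-ratios hypothesis on $G$) plays no role in this proposition, absolute summability alone sufficing. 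One caveat, which affects your write-up and the paper's equally: the identity $\sum_{\Lambda \in T} a_{\Lambda} = 1$ via the map $g \mapsto g^{-1}\Lambda_0$ requires that map to be injective on $\Lambda_0$, which can fail when $\Lambda_0$ has a nontrivial left stabilizer (possible only if $G$ has torsion); in that case $\sum_{\Lambda \in T} a_{\Lambda} = 1/|\mathrm{Stab}(\Lambda_0)|$. Since this quantity is still determined by $\Lambda_0$ alone and not by the weights, the weight-independence conclusion --- the actual content of the proposition --- survives unchanged in both arguments; only the closed-form expression $-\sum_{T \in \cT} I_T$ acquires stabilizer factors.
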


\begin{proof}
As in the proof of Proposition \ref{full-dim-shreg}, for each finite $\Lambda \Subset G$ with $e \in \Lambda$, let $T(\Lambda) = \{ g^{-1} \Lambda \, | \, g \in \Lambda \}$. For any given $T$, the quantity $\int_X \Phi_{\Lambda} \, d\mu$ is constant as $\Lambda$ ranges over $T$, so we denote it by $b_T$. We now compute:
\begin{align*}
\int_X A_{\Phi} \, d\mu &= -\int_X \sum_{T \in \cT} \sum_{\Lambda \in T} a_{\Lambda} \Phi_{\Lambda} \, d\mu \\
&= -\sum_{T \in \cT} b_T \sum_{\Lambda \in T} a_{\Lambda}   \\
&= -\sum_{\substack{\Lambda \Subset G \\ e \in \Lambda}} \frac{1}{|\Lambda|} \int_X \Phi_{\Lambda} \, d\mu
\end{align*}
which does not depend on the weights $a_{\Lambda}$, and in addition clearly expresses the integral $\int_X A_{\Phi} \, d\mu$ as the average energy at the identity due to the interaction $\Phi$.

To justify exchanging the integral and the sum above, let $|\Phi|$ be the interaction given by $|\Phi|_{\Lambda} = |\Phi_{\Lambda}|$. Then $|\Phi|$ is still full-dimensional, with $\| |\Phi| \|_B = \|  \Phi  \|_B$, so 
\begin{equation*}
    -\sum_{T \in \cT} \sum_{\Lambda \in T} a_{\Lambda} |\Phi_{\Lambda}| = A_{|\Phi|} \in \ShReg(X)
\end{equation*}
by Proposition \ref{full-dim-shreg}. Thus the sum converges absolutely to a continuous function.
\end{proof}

Finally, we introduce a smaller Banach space $\VolReg(X)$ of \textit{volume-regular functions}, defined analogously to $\ShReg(X)$ by a volume norm rather than a shell norm. That is, $\VolReg(X) = \{ f: X \to \R \, : \, \VolVar{f} < \infty  \}$ where we define
\begin{equation*}
\VolVar{f} := \sum_{k = 0}^{\infty} | B_k | v_{k} (f)
\end{equation*}
Volume-regularity clearly implies shell-regularity. The following result of Muir (\cite{muir2011gibbs}, proof of Fact 7.6) is stated for $\Z^d$, with the name $\mathrm{Reg}_d(X)$ for $\VolReg(X)$, but is valid, with the same proof, on any finitely generated group.

\begin{thm}\label{preimg}
Let $G$ be a finitely generated group and let $f \in \VolReg(X)$ be a volume-regular potential. Then there exists an absolutely summable $\Phi \in \cB$ with $A_{\Phi} = f$ where $A_{\Phi}$ is the image of $\Phi$ under some dictator map.
\end{thm}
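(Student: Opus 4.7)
The plan is to decompose $f$ as a telescoping sum along the balls $(B_n)_{n \geq 0}$ around the identity, support the interaction $\Phi$ on their translates, and recover $f$ via a dictator map that designates $B_n$ itself as the representative of its translate class.

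First I will build cylindrical approximations. For each pattern $\omega \in \cA^{B_n}$ that extends to a point of $X$, fix a representative $y_\omega \in X$ with $y_\omega|_{B_n} = \omega$, and set $f_n(x) := f(y_{x|_{B_n}})$, together with $f_0 \equiv 0$. Then $f_n$ is $\cF_{B_n}$-measurable and $\|f - f_n\|_\infty \leq v_n(f)$. Volume-regularity forces $v_n(f) \to 0$, so $f_n \to f$ uniformly and $f = \sum_{n \geq 1}(f_n - f_{n-1})$ pointwise, with the crucial bound $\|f_n - f_{n-1}\|_\infty \leq v_n(f) + v_{n-1}(f) \leq 2 v_{n-1}(f)$.

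Next I define the interaction by $\Phi_{B_n}(x) := -(f_n(x) - f_{n-1}(x))$ for $n \geq 1$, extended by translation invariance via $\Phi_{gB_n}(x) := \Phi_{B_n}(g^{-1}\cdot x)$, and set $\Phi_\Lambda \equiv 0$ on every $\Lambda \Subset G$ not in any ball translate class. For the dictator map I put $a_{B_n} = 1$ with $a_{\Lambda'} = 0$ for the other elements of $T(B_n)$, and make an arbitrary valid choice on the remaining translate classes (where $\Phi$ vanishes); the consistency condition $\sum_{g \in \Lambda} a_{g^{-1}\Lambda} = 1$ is then immediate class by class. Since only the ball terms survive in the dictator sum,
\[
A_\Phi(x) = -\sum_{n \geq 1} \Phi_{B_n}(x) = \sum_{n \geq 1}\bigl(f_n(x) - f_{n-1}(x)\bigr) = f(x).
\]
For absolute summability, I combine $|T(B_n)| \leq |B_n|$, the translation-invariance of $\|\Phi_\Lambda\|_\infty$, and the elementary bound $|B_{n+1}| \leq K|B_n|$ with $K = |S|+1$ available on any finitely generated group:
\[
\|\Phi\|_B = \sum_{n \geq 1} \sum_{\Lambda \in T(B_n)} \|\Phi_\Lambda\|_\infty \leq \sum_{n \geq 1} |B_n|\cdot 2 v_{n-1}(f) \leq 2K \VolVar{f} + 2\|f\|_\infty < \infty.
\]

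The main obstacle is ensuring that the translation-invariant extension $\Phi_{gB_n}(x) = \Phi_{B_n}(g^{-1}\cdot x)$ is well defined when a ball $B_n$ has nontrivial stabilizer $\mathrm{Stab}(B_n) = \{h \in G : hB_n = B_n\}$, a finite subgroup of $B_n$ that is trivial for $\Z^d$ and for free groups with their standard generators but may be nontrivial in general. Consistency forces $\Phi_{B_n}$ to be invariant under this stabilizer acting on $X$, which I arrange by replacing $f_n$ with its symmetrization $\tilde f_n(x) := |\mathrm{Stab}(B_n)|^{-1} \sum_{h \in \mathrm{Stab}(B_n)} f(y_{(h\cdot x)|_{B_n}})$; the symmetrization preserves the uniform approximation bound $\|f - \tilde f_n\|_\infty \leq v_n(f)$ and hence all subsequent estimates. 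With this technical adjustment the proof goes through on any finitely generated group.
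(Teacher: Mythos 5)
Your core construction --- the cylinder approximations $f_n$, the telescoping interaction $\Phi_{B_n} = -(f_n - f_{n-1})$ supported on ball translates, the dictator map selecting $B_n$, and the summability estimate via $|B_{k+1}| \leq (|S|+1)|B_k|$ --- is exactly the argument of Muir that the paper cites (the paper gives no self-contained proof, asserting instead that Muir's $\Z^d$ proof generalizes verbatim), and whenever every ball has trivial stabilizer $\mathrm{Stab}(B_n) = \{h : hB_n = B_n\}$ --- in particular for every torsion-free group, so for $\Z^d$ and free groups --- your proof is correct as written. You have also correctly isolated the one genuine subtlety in the generalization, which the paper passes over: translation-invariance forces $\Phi_{B_n}(h \cdot x) = \Phi_{B_n}(x)$ for all $h \in \mathrm{Stab}(B_n)$, and these stabilizers can be nontrivial. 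For instance, in $G = \Z \times (\Z/2\Z)$ with generating set $S = \{(\pm 1, 0), (0,1), (\pm 1, 1)\}$, the word length of $(m,\epsilon)$ is $|m|$ for $m \neq 0$ while $(0,1)$ has length $1$, so $B_k = \{(m,\epsilon) : |m| \leq k-1\}$ for $k \geq 2$; hence $h_0 = (0,1)$ stabilizes every $B_k$ with $k \geq 2$, though not $B_1 = \{(0,0)\}$.

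The gap is that your symmetrization does not repair this, and it fails in two distinct ways. First, it does not even deliver well-definedness: $\tilde f_{n-1}$ is symmetrized over $\mathrm{Stab}(B_{n-1})$, and stabilizers of successive balls need not be nested (in the example, $\mathrm{Stab}(B_2) = \{e, h_0\}$ but $\mathrm{Stab}(B_1) = \{e\}$), so $-(\tilde f_n - \tilde f_{n-1})$ need not be $\mathrm{Stab}(B_n)$-invariant. Second, the claimed bound $\|f - \tilde f_n\|_\infty \leq v_n(f)$ is false: $y_{(h \cdot x)|_{B_n}}$ agrees on $B_n$ with $h \cdot x$, not with $x$, so $\tilde f_n$ is uniformly within $v_n(f)$ of the average of $x \mapsto f(h \cdot x)$ over $h \in \mathrm{Stab}(B_n)$; since $f$ is not assumed shift-invariant, your dictator sum telescopes to that average rather than to $f$. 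Concretely, in the example take $f(x) = x_{(0,0)}$, which lies in $\VolReg(X)$ because $v_k(f) = 0$ for $k \geq 1$; then $\tilde f_n(x) = \frac{1}{2}\left(x_{(0,0)} + x_{(0,1)}\right)$ for $n \geq 2$, the resulting $\Phi_{B_2}$ is not $h_0$-invariant, and your $A_{\Phi}(x) = \frac{1}{2}\left(x_{(0,0)} + x_{(0,1)}\right) \neq f(x)$. (A further symptom: with a nontrivial stabilizer, setting $a_{B_n} = 1$ violates the normalization, since $\sum_{g \in B_n} a_{g^{-1}B_n} = |\mathrm{Stab}(B_n)|$.) The failure is not an artifact of your particular fix: in this abelian example each $\Phi_{B_n}$ with $n \geq 2$ is forced to be $h_0$-shift-invariant, so any translate-weighting map applied to any translation-invariant interaction supported on ball translates produces a function of $x_{(0,0)}$ plus an $h_0$-shift-invariant function, and $f(x) = x_{(1,0)} \in \VolReg(X)$ is not of that form. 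A genuine repair must therefore change the supporting shapes rather than modify $f_n$: for example, assign $-(f_n - f_{n-1})$ to $\Lambda_n = B_n \cup \{w_n\}$ with $d(w_n, e) > 3n$; one checks that such a set has trivial stabilizer for $n \geq 2$ when $G$ is infinite, that $f_n - f_{n-1}$ is still $\cF_{\Lambda_n}$-measurable, and that $|\Lambda_n| = |B_n| + 1$ leaves your bound on $\|\Phi\|_B$ intact.
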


In particular, any Gibbs measure for $f \in \VolReg(X)$ is also a Gibbs measure for any potential $\Phi \in \cB$ with $A_{\Phi} = f$, and vice versa. 


\section*{Acknowledgments}

We thank Brian Marcus and Tom Meyerovitch for many helpful and generous discussions throughout the course of this work. We also thank Rodrigo Bissacot for drawing our attention to the work of Kimura; Bruno Kimura and Sebasti{\'a}n Barbieri for helpful conversations; Nishant Chandgotia for providing the example in \S\ref{potl_from_intrxn_sec}; and an anonymous referee for several helpful suggestions. LB is supported by grants 2018/21067-0 and 2019/08349-9, S\~ao Paulo Research Foundation (FAPESP).


\begin{thebibliography}{10}

\bibitem{breuillard-le-donne-2013-rate}
Emmanuel Breuillard and Enrico~Le Donne.
\newblock {On the rate of convergence to the asymptotic cone for nilpotent
  groups and subFinsler geometry}.
\newblock {\em Proc. Natl. Acad. Sci. USA}, 110(48):19220--19226, 2013.

\bibitem{capocaccia-1976-definition}
D.~Capocaccia.
\newblock {A definition of Gibbs state for a compact set with $\Z^{\nu}$
  action}.
\newblock {\em Comm. Math. Phys.}, 48:85--88, 1976.

\bibitem{cioletti2020ruelle}
Leandro Cioletti, Artur~O Lopes, and Manuel Stadlbauer.
\newblock Ruelle operator for continuous potentials and {DLR-G}ibbs measures.
\newblock {\em Discrete Contin. Dyn. Syst.}, 40(8):4625, 2020.

\bibitem{delaharpe2000groups}
Pierre de~la Harpe.
\newblock {\em Topics in geometric group theory}.
\newblock Chicago Lectures in Mathematics. Chicago, 2000.

\bibitem{denker-urbanski-1991-conformal}
Manfred Denker and Mariusz Urba{\'n}ski.
\newblock On the existence of conformal measures.
\newblock {\em Trans. Amer. Math. Soc.}, 328(2), December 1991.

\bibitem{dobrushin1970conditional}
R.L. Dobrushin.
\newblock {Gibbsian random fields for lattice systems with pairwise
  interactions}.
\newblock {\em Funct. Anal. Appl.}, 2(4):292--301, 1969.

\bibitem{feldman-moore-1977-equivalence}
Jacob Feldman and Calvin~C. Moore.
\newblock Ergodic equivalence relations, cohomology, and von {Neumann}
  algebras. {I}.
\newblock {\em Trans. Amer. Mathematical Society}, 234(2):289--324, 1977.

\bibitem{gromov1981groups}
Michael Gromov.
\newblock {Groups of polynomial growth and expanding maps (with an appendix by
  Jacques Tits)}.
\newblock {\em Publications Math{\'e}matiques de l'IH{\'E}S}, 53:53--78, 1981.

\bibitem{lanfordruelle1969observables}
O.E.~Lanford III and David Ruelle.
\newblock {Observables at infinity and states with short range correlations in
  statistical mechanics}.
\newblock {\em Comm. Math. Phys.}, 13(3):194--215, 1969.

\bibitem{kechris2019borel}
Alexander~S. Kechris.
\newblock {The theory of countable Borel equivalence relations}, 2019.
\newblock Preprint. \url{http://www.math.caltech.edu/~kechris/}.

\bibitem{kimura-2015-thesis}
Bruno Kimura.
\newblock Gibbs measures on subshifts.
\newblock Master's thesis, University of S{\~{a}}o Paulo, 2015.
\newblock Preprint, arXiv:2008.13727 [math.DS].

\bibitem{meyerovitch-2013-gibbs-eqm}
Tom Meyerovitch.
\newblock Gibbs and equilibrium measures for some families of subshifts.
\newblock {\em Ergodic Theory Dynam. Systems}, 33:934--953, 2013.

\bibitem{muir2011paper}
Stephen Muir.
\newblock {A new characterization of Gibbs measures on $\N^{\Z^d}$}.
\newblock {\em Nonlinearity}, 24:2933--2952, 2011.

\bibitem{muir2011gibbs}
Stephen~R Muir.
\newblock {\em Gibbs/equilibrium measures for functions of multidimensional
  shifts with countable alphabets}.
\newblock PhD thesis, University of North Texas, 2011.

\bibitem{patterson1976fuchsian}
S.J. Patterson.
\newblock {The limit set of a Fuchsian group}.
\newblock {\em Acta Math.}, 136:241--273, 1976.

\bibitem{petersen1997symmetric}
Karl Petersen and Klaus Schmidt.
\newblock {Symmetric Gibbs measures}.
\newblock {\em Trans. Amer. Math. Soc.}, 349(7):2775--2811, 1997.

\bibitem{ruelle-2004-thermo}
David Ruelle.
\newblock {\em Thermodynamic formalism: the mathematical structures of
  equilibrium statistical mechanics}.
\newblock Cambridge, 2nd edition, 2004.

\bibitem{sarig2009notes}
Omri~M. Sarig.
\newblock {Lecture notes on thermodynamic formalism for countable Markov
  shifts}.
\newblock Unpublished lecture notes available from the author's webpage, 2009.

\bibitem{stoll-1998-two-step}
Michael Stoll.
\newblock On the asymptotics of 2-step nilpotent groups.
\newblock {\em J. London. Math. Soc.}, 58(1):38--48, 1998.

\end{thebibliography}
\end{document}